\numberwithin{equation}{section}
\theoremstyle{plain}
\newtheorem{theorem}{Theorem}[section]
\newtheorem{corollary}{Corollary}[section]
\newtheorem{lemma}{Lemma}[section]
\newtheorem{lemmaa}{Lemma A.\hspace{-1ex}}
\newtheorem{remark}{Remark}[section]
\begin{document}
\def\N{\mathbb{N}}
\def\di{\displaystyle}
\def\indi{1\hspace{-1,1mm}{\rm I}}
\def\Fb{{\mathbb{F}}}
\def\Rb{{\mathbb{R}}}
\def\Pr{{\mathbb{P}}}
\def\thetab{\mbox{\boldmath$\theta$}}
\def\thetas{\mbox{\scriptsize\boldmath$\theta$}}
\def\zetab{\mbox{\boldmath$\zeta$}}
\def\Pc{{\cal{P}}}
\def\E{{\mathbb E}}
\def\Var{{\mathbb V}\hspace{-0.2ex}\mbox{\bf ar}}

\begin{frontmatter}
\title{{\large\bf UNIFORM CONVERGENCE RATE OF NONPARAMETRIC MAXIMUM LIKELIHOOD ESTIMATOR FOR THE 
CURRENT STATUS DATA WITH COMPETING RISKS\thanksref{T1}}}
\runtitle{Current status data with competing risks}
\thankstext{T1}{This research was supported in part by St.-Petersburg State University (grant No 1.52.1647.2016). }

\begin{aug}
\author{\fnms{Sergey V.} \snm{Malov}\thanksref{t1}\ead[label=e1]{malovs@sm14820.spb.edu}}
\thankstext{t1}{E-mail: malovs@sm14820.spb.edu  }

\runauthor{S.V. Malov}

\affiliation{St.-Petersburg State University and St.-Petersburg Electrotechnical University}

\address{
 St.-Petersburg State University $\&$  St.-Petersburg  Electrotechnical  University, 
 St.-Petersburg, Russia
 }

\end{aug}

\begin{abstract}
We study the uniform convergence rate of the nonparametric maximum likelihood estimator (MLE) for the sub-distribution functions in the current status data with competing risks model. It is known that the MLE have $L^2$-norm convergence rate $O_P(n^{-1/3})$ in the absolutely continuous case, but there is no arguments for the same rate of uniform convergence. We specify conditions for the uniform convergence rate $O_P(n^{-1/3}\log^{1/3} n)$ of the MLE for the sub-distribution functions of competing risks on finite intervals. The obtained result refines known uniform convergence rate in the particular case of current status data. The main result is applied in order to get the uniform convergence rate of the MLE for the survival function of failure time in the current status right-censored data  model. 
\end{abstract}

\begin{keyword}[class=AMS]
\kwd[Primary ]{62N01}
\kwd[; secondary ]{62N02}
\kwd{62G05}
\kwd{62G20 }
\end{keyword}

\begin{keyword}
\kwd{survival data}
\kwd{interval censoring}
\kwd{competing risks}
\kwd{nonparametric maximum likelihood estimate}
\end{keyword}
\end{frontmatter}

\section{Introduction}
\label{sec:intro}

We study a current status data with $K$ competing risks. The competing risks data is given as a sample from the bivariate distribution $(X,Y)$, where $X$ is a failure time variable and $Y\in\{1,\ldots,K\}$ is the corresponding failure cause. We assume that the failure time is not observed exactly, but at some random inspection time $T$. The current status observation with competing risks is $(T,\Delta)$, where $\Delta=(\Delta^1,\ldots,\Delta^{K+1})$, $\Delta^k=\indi_{\{X\leq T,Y=k\}}$ for $k=1,\ldots,K$, and $\Delta^{K+1}=\indi_{\{X>T\}}$. The observed data  is a sample from the distribution $(T,\Delta)$. The primary targets of statistical analysis are the sub-distribution functions $F_1,\ldots,F_K$ of the competing risks, $F_k(x)=\Pr(X\leq x,\Delta^k=1)$, $k=1,\ldots,K$.

The current status right-censored data is the special case of the current status data with two competing risks. Let the failure time $T^{\circ}$ be subject to random censoring by a random variable $U^{\circ}$. The event time $X=T^{\circ}\wedge U^{\circ}$ is not observed exactly, but  in a random inspection time $T$.  If both failure and censoring times fall before the observation time a current status of participant at the event time can assumed to be observed or not observed. The most interesting case of observed current status after censoring we call the current status interval right-censored data. The current status interval right-censored observation is  $(T,\Delta)$, where $\Delta^{1}=\indi_{\{T^{\circ}\leq U^{\circ}\leq T\}}$, $\Delta^{2}=\indi_{\{U^{\circ}<T^{\circ}\leq T\}}$ and $\Delta^{3}=1-\Delta^{1}-\Delta^{2}=\indi_{\{X>T\}}$.
Unlike the current status data with competing risks model, the primary target of interest now is the survival function $S$ of failure time $T^{\circ}$. 

The  current status data \cite{tur76} is the particular case of the current status data with competing risks under $K=1$, as well as the particular case of the current status right-censored data under $U^{\circ}=\infty$. The  nonparametric maximum likelihood estimator (MLE) for the current status data can be obtained as a solution of the isotonic regression model \cite{abers55} using Convex Minorant Algorithm. Alternatively, the MLE can be obtained by the EM-algorithm \cite{tur74,tur76}. Asymptotic behavior of the MLE at any fixed point studied in \cite{gre87,gre91,GW92}.  \citet{GW92} (see also \cite{gre91}) discussed wide range of asymptotic results on the MLE. Particularly, the uniform rate of convergence for the MLE of the failure time distribution function is obtained in \citet[Lemma~5.9]{GW92}. 

The MLE and the nonparametric pseudo likelihood estimator (PLE) of parameters for the current status data with competing risks, and the EM-algorithms to get the estimators are given by \citet{HSL01}. Another na\"ive (ad-hoc) estimator is discussed in \cite{JLH03}, as well as the MLE. Consistency and rate of convergence results for the MLE are obtained in \cite{GMW08a}, and weak convergence results are given in  \cite{GMW08b}. The current status data model with two competing risks is a baseline statistical model for the current status interval right-censored data model. Then the distribution of failure time is restored from the baseline  parameter using the product-limit method. Consistency and the rate of convergence in total variance of the corresponding MLE, PLE and the na\"ive estimators for the survival function of failure time in the current status right-censored data model are obtained in \cite{mal19}. 

In this work we focus on the uniform rate of convergence of the MLE for the distributions of competing risks. The obtained uniform convergence rate will be used to improve the rate of convergence result in \cite{mal19}.  The MLE of the competing risks distributions in the current status data with competing risks model and the corresponding estimate of the survival function of failure time for the current status right-censored data are described in Section \ref{sec:mlik}. In Section \ref{sec:rate} we discuss the uniform convergence rate of the MLE for competing risks distribution functions and the corresponding survival function of failure time in the current status right-censored data model.  Main proofs are given the last Section \ref{sec:proof}, and a technical lemma is postponed to Appendix. 

\section{The maximum likelihood estimate}
\label{sec:mlik}

In this section we study the likelihood function in the current status data with competing risks model and discuss the MLE for the distribution functions of the competing risks as well as the MLE for survival functions of failure time in the current status right-censored data model. 

Assume that the competing risk $(X,Y)$ is independent of the observation time $T$.
Let $(T_i,\Delta_i)$,  where $\Delta_i=(\Delta_i^{1},\ldots,\Delta_i^{K})$, $i=1,\ldots,n$, be a sample from the distribution $(T,\Delta)$; $\Fb$ be the set of $K$-tuples $F=(F_1,\ldots,F_K)$ of sub-distribution functions (non-negative nondecreasing grounded at $0$ cadlag) with $F_+\equiv \sum_{k=1}^K F_{i}\leq 1$;
$(F_{01},\ldots,F_{0K})\in\Fb$ be the true sub-distribution functions of the competing risks;
$
\gamma_{k}\!=\!\sup\{x\!:\! F_{0k}(x)\!<\!F_{0k}(\infty)\}
$;
$F_{0,K+1}\equiv 1-F_{0+}$ and $F_{0+}\equiv \sum_{k=1}^K F_{0k}$. The log-likelihood function for the current status data  with competing risks is following:
\setlength{\arraycolsep}{2pt}
\begin{equation}
\label{equ:llik0}
LL_n(F) =\displaystyle \int_{\Rb\times \{0,1\}^K}\Bigl\{\sum\nolimits_{k=1}^{K} \delta_k\log F_k(t)+\bar\delta \log F_{K+1}(t)\Bigr\}dP_n(t,\delta),
\end{equation}
where $\bar\delta=1-\sum_{k=1}^K \delta_k$, and $P_n$ is the empirical measure of the sample $(T_i,\Delta_i)$, $i=1,\ldots,n$. Let $T_{(1)}\leq\ldots\leq T_{(n)}$ be the order statistics of the sample $T_1,\ldots,T_n$ and $\Delta_{(1)},\ldots,\Delta_{(n)}$, where $\Delta_{(i)}=(\Delta_{(i)}^1,\ldots,\Delta_{(i)}^K)$, $i=1,\ldots,n$, be the corresponding concomitants. Denote $\Fb_n$ is the set of $K$-tuples of sub-distribution step functions $(F_{n1},\ldots,F_{nK})$: $F_{nk}$ has jumps on the set of observation times $\{T_{(i)}:\Delta_{(i)}^k=1\}$ and $\sum_{k=1}^K F_{nk}(\infty)\leq 1$. The MLE $\widehat F_n=(\widehat F_{n1},\ldots,\widehat F_{nK})$ maximizes the log-likelihood (\ref{equ:llik0}) over $\Fb_n$. 

The characterization of the MLE $\widehat F_n$ due to \citet[Corollary 2.10]{GMW08a}. The $\widehat F_{n}$ is maximizes $LL_n$ over the set of functions $F_n\in\Fb_n$ iff for $k=1,\ldots,K$ at each jump-point $\tau_{nk}$ of $\widehat F_{nk}$
\begin{equation}
\label{equ:char}
\int_{[\tau_{nk},s)} \Bigl\{\frac{dV_{nk}(u)}{\widehat F_{nk}(u)} - \frac{dV_{n,K+1}(u)}{\widehat F_{n,K+1}(u)}\Bigr\}\geq
\beta_n \indi_{[\tau_{nk},s)}(T_{(n)}),\quad s\in\Rb,
\end{equation}
with the equality holds if $s$ is a point of increase of $\widehat F_{nk}$ and $s>T_{(n)}$,  
where $V_{nk}(u)=\int_{t\leq u}\delta_k dP_n(t,\delta)$, $k=1,\ldots,K+1$, and $\beta_n=1-\int \frac{dV_{n,K+1}(u)}{\widehat F_{n,K+1}(u)}$. Moreover, $\beta_n\geq 0$, and $\beta_n=0$ iff there exists an observation $T_i=T_{(n)}$, such that $\Delta_{i}^{K+1}=1$ \cite[Corollary 2.9]{GMW08a}. The inequality (\ref{equ:char}) implies immediately that at each jump-point $\tau_{nk}$ of $\widehat F_{nk}$
\begin{equation}
\label{equ:char1}
\int_{[\tau_{nk},s)} \Bigl\{\frac{dV_{nk}(u)}{\widehat F_{nk}(u)} - \frac{dV_{n,K+1}(u)}{\widehat F_{n,K+1}(u)}\Bigr\}\geq
0,\quad s<T_{(n)},
\end{equation}
and at each jump-point $\tau_{nk}<T_{(n)}$ of $\widehat F_{nk}$
\begin{equation}
\label{equ:char2}
\int_{[s,\tau_{nk})} \Bigl\{\frac{dV_{nk}(u)}{\widehat F_{nk}(u)} - \frac{dV_{n,K+1}(u)}{\widehat F_{n,K+1}(u)}\Bigr\}\leq
0,\quad s\geq T_{(1)},
\end{equation}
with the equalities hold if $s$ is a point of increase of $\widehat F_{nk}$, $k=1,\ldots,K$.  

In order to recover the survival function $S$ of failure time $T^{\circ}$ in the current status right-censored data model which is based on the current status data with two competing risks, one can use the following representation of the cumulative hazard function $\Lambda(x)=\int_0^x (1-F_{01-} -F_{02-})^{-1} dF_{01}$ and, therefore, 
\begin{equation}
\label{equ:reconstr}
S(t)=\PRODI_{x\leq t}\Bigl(1-\frac{dF_{01}(x)}{1-F_{01}(x_-)-F_{02}(x_-)} \Bigr) 
\end{equation}
under $F_{01}(x)=\Pr(T^{\circ}\leq x, T^{\circ}\leq U^{\circ}\leq T)$ and $F_{02}(x)=\Pr(U^{\circ}\leq x, U^{\circ}<T^{\circ}\leq T)$. The survival function $Q$ of censoring time $U^{\circ}$ is determined by the cumulative hazard function
$\Lambda^{\! U}(x)=\int_0^x \frac{S_-}{S F_{03-}} \,dF_{02}$ and, therefore, 
$
Q(t)=\prodi\nolimits_{x\leq t}\bigl(1-d\Lambda^{\! U}(x) \bigr).
$ 
Alternatively, 
$
Q(t)=\int_0^t 1/S\, dF_{02}.
$

There are several ways to get the MLE from current status data with competing risks. The EM-algorithm due to \citet{HSL01}  is working too slow. It would be preferable to use the iterated convex minorant (ICM) algorithm (see  \citet{GJ14}, Section 7.5) based on the characterization  of the MLE for current status data with competing risk in (\ref{equ:char}). Alternatively, the MLE for the parameter $F$ can be obtained by applying the support reduction algorithm \cite{GJW08} realized in the R-package {\it MLEcens} \cite{mth13}. In order to create the MLE $\widehat S_n$ for the survival function of failure time $T^{\circ}$ in the current status right-censored data model one can apply the reconstruction formula (\ref{equ:reconstr}) with $(\widehat F_{n1}, \widehat F_{n2})$ instead of $(F_{01}, F_{02})$.  

\section{The uniform convergence rate}
\label{sec:rate}

In this section we discuss the uniform convergence rate of the MLE for the current status data with competing risks. Moreover, we obtain the uniform convergence rate for the survival function of failure time in the current status right-censored data model as an application of the result for current status data with competing risks. We will slightly abuse notation by using the same symbol for a non-decreasing function and the induced Lebesgue--Stieltjes measure. Particularly, $F_k((-\infty,x])=F_k(x)$ for all $x\in\Rb$, $k=1,\ldots,K+1$. 

For each $F\in \Fb$ we define $L_{F}\!\!:\!\Rb\times\{0,\!1\}^K\to \Rb_+$ as
$
L_{F}=L_{F}(w,\delta)=\prod_{i=1}^{K+1} F_k(w)^{\delta_k}
$,
and $\Pc=\{L_{F}:F\in \Fb\}$. Introduce the Hellinger  distance between two functions $p_1\in\Pc$ and $p_2\in\Pc$ as
$$
h(p_1,p_2)=\Bigl(\frac{1}{2}\int (p_1^{1/2}-p_2^{1/2})^2d\mu\Bigr)^{1/2},
$$
where $\mu=G\times\nu_1\times\ldots\times\nu_K$, $G$ is the distribution of $T$, and $\nu_1,\ldots,\nu_K$ are the counting measures on $\{0,1\}$. 
We also use notations 
$\|\cdot\|_{2}=\bigl(\int \|\cdot\|^2 dG\bigr)^{1/2}$ is the $L_2(G)$-norm, $\|\cdot \|_{A}=\sup_{A}\|\cdot\|$ and $\|\cdot\|=\|\cdot\|_{\Rb}$ for the supremum norm. 

\citet[Theorem 4.1]{GMW08a} obtained Hellinger rate of convergence $h(L_n,L_0)=O_P(n^{-1/3})$ that implies immediately 
\begin{equation}
\label{equ:qrate}
\|\widehat F_{nk}-F_{0k}\|_{2}=O_P(n^{-1/3}), 
\end{equation}
but there is no arguments for the same rate of uniform convergence. In the particular case of interval censored data the uniform convergence rate $O(n^{-1/3}\log n)$ obtained by \citet[Lemma 5.9]{GW92}. \citet[Theorem 4.10]{GMW08a} show that under continuously differentiable $F_{0k}$ and $G$ with bounded away from zero derivatives at some fixed point $t_0$, there exists a constant $r>0$ such that 
$$
\sup_{t\in [t_0-r,t_0+r]}\frac{|\widehat F_{n+}(t)-F_{0+}(t)|}{v_n(t-t_0)}=O_P(1),
$$
where $v_n(t)=n^{1/3}\indi_{\{t\leq n^{-1/3}\}}+n^{(1-\beta)/3}|t|^{\beta}\indi_{\{t> n^{-1/3}\}}$ for $t>0$ and some $\beta\in (0,1)$. The uniform convergence rate $O_P(n^{-(1-\beta)/3})$ of the MLE $\widehat F_n$ to the parameter $F_0$ on any interval $[\gamma_-,\gamma_+]$, such that $F_{0k}\in (0,1)$, $k=1,\ldots,N$, and $G\in (0,1)$ both are continuously differentiable with bounded away from zero derivatives on the interval, then follows immediately, but it does not imply the uniform convergence in a neighborhood of point~$0$. The main result of this work is following.
 
\begin{theorem}
\label{teo:unirate}
Let $F_{0+}\equiv\sum_{k=1}^K F_{0k}$; $\gamma: F_{0+}(\gamma)<F_{0+}(\infty)$; the functions $F_{0k}$ and $G$ are absolutely continuous, $F_{0k}<\!\!<G$ with $\varepsilon\leq \frac{dF_{0k}}{dG}\leq 1/\varepsilon$ on the interval $(0,\gamma]$ for some $\varepsilon\in (0,1)$, $k=1,\ldots,K$. Then for all $k=1,\ldots,K$,
\begin{equation*}
\|\widehat F_{nk}-F_{0k}\|_{[0,\gamma]}=O_P(n^{-1/3}\log^{1/3} n).
\end{equation*}
\end{theorem}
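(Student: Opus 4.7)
Set $r_n := n^{-1/3}\log^{1/3} n$ and fix $k\in\{1,\dots,K\}$. By symmetry between the one-sided deviations, handled respectively by (\ref{equ:char1}) and (\ref{equ:char2}), it suffices to show that for every sufficiently large constant $M$,
\begin{equation*}
\Pr\bigl(\exists\,t_0\in[0,\gamma]:\ \widehat F_{nk}(t_0)-F_{0k}(t_0)\geq Mr_n\bigr)\longrightarrow 0.
\end{equation*}
The approach is the Groeneboom--Wellner inversion of the Fenchel characterization: a large pointwise deviation is propagated, via monotonicity of $\widehat F_{nk}$, to a large deviation on a full interval; the Fenchel inequality can survive this only through a correspondingly large increment of the centred empirical process $W_{nk}:=V_{nk}-V_k^{(0)}$ with $V_k^{(0)}(u):=\int_{[0,u]}F_{0k}\,dG$, and a bracketing-entropy bound will rule this out exactly at the rate $r_n$.

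\medskip

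\textbf{From point to interval; characterization.} If $\widehat F_{nk}(t_0)-F_{0k}(t_0)\geq Mr_n$, the bound $dF_{0k}/dG\leq 1/\varepsilon$ combined with monotonicity of $\widehat F_{nk}$ produces an interval $I_n=[t_0,t_1]\subset[0,\gamma]$ with $G(t_1)-G(t_0)\asymp Mr_n$ on which $\widehat F_{nk}-F_{0k}\geq Mr_n/2$. Taking jump-points $\tau<t_0$ and $\tau'>t_1$ of $\widehat F_{nk}$ and combining (\ref{equ:char1}) at $\tau$ with (\ref{equ:char2}) at $\tau'$ yields
\begin{equation*}
\int_{[\tau,\tau')}\!\Bigl\{\frac{dV_{nk}}{\widehat F_{nk}}-\frac{dV_{n,K+1}}{\widehat F_{n,K+1}}\Bigr\}\geq 0.
\end{equation*}
Centring at $V_k^{(0)},V_{K+1}^{(0)}$ and using
$
\frac{F_{0k}}{\widehat F_{nk}}-\frac{F_{0,K+1}}{\widehat F_{n,K+1}}=\frac{F_{0k}\widehat F_{n,K+1}-F_{0,K+1}\widehat F_{nk}}{\widehat F_{nk}\widehat F_{n,K+1}},
$
splits the integral into a deterministic drift and a fluctuation term.

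\medskip

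\textbf{Drift/fluctuation balance.} The condition $F_{0+}(\gamma)<F_{0+}(\infty)$, together with $dF_{0k}/dG\in[\varepsilon,1/\varepsilon]$, ensures that $\widehat F_{n,K+1}$ is bounded below by a positive constant on $[0,\gamma]$ (eventually, on an event of probability tending to one) and that $F_{0k}\geq\varepsilon G$, so that after the required a priori lower bound on $\widehat F_{nk}$ the denominators are controllable. A direct rearrangement of the drift numerator, using $\widehat F_{nk}-F_{0k}\gtrsim Mr_n$ on $I_n$, shows the drift is of order $-M^{2}r_n^{2}$, with the sign opposite to the Fenchel inequality. The fluctuation is controlled via bracketing entropy: the class of monotone functions on $[0,\gamma]$ satisfies $\log N_{[\,]}(\eta,\Fb,L_2(P))\lesssim 1/\eta$, and a peeling argument over dyadic scales $G(I)\in[2^jr_n,2^{j+1}r_n]$ combined with a Bernstein-type tail bound for empirical processes indexed by monotone functions yields
\begin{equation*}
\sup_{I:\,G(I)\leq \delta_n}\Bigl|\!\int_I\! dW_{nk}/\widehat F_{nk}\Bigr|=O_P\bigl(n^{-1/2}\sqrt{\delta_n\log(1/r_n)}\,\bigr),\qquad \delta_n:=Mr_n.
\end{equation*}
Equating with the drift $M^{2}r_n^{2}$ produces the balance $r_n^{3}\asymp n^{-1}\log n$, which is exactly $r_n=n^{-1/3}\log^{1/3}n$; for $M$ large enough the two bounds are incompatible, giving the required contradiction.

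\medskip

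\textbf{Principal obstacle.} The main technical difficulty is the behaviour near $t=0$: both $F_{0k}$ and (potentially) $\widehat F_{nk}$ vanish there, so the integrands $1/\widehat F_{nk}$ and $1/\widehat F_{n,K+1}$ in the fluctuation term are a priori unbounded, and a lower bound of the form $\widehat F_{nk}(t)\geq cG(t)$ uniformly on $[0,\gamma]$ is needed before the bracketing estimate can be applied. Establishing this uniform-in-$n$ lower bound from (\ref{equ:char1}) at the leftmost jump-points of $\widehat F_{nk}$ together with the $L^2$ rate (\ref{equ:qrate}) is the content of the technical lemma postponed to the Appendix, and is where most of the work beyond the Groeneboom--Wellner template is located.
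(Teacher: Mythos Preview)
Your outline follows the right Groeneboom--Wellner template, but the treatment of the left endpoint is where it breaks down, and your diagnosis of the paper's fix is incorrect. The lower bound $\widehat F_{nk}(t)\geq cG(t)$ you call for cannot hold uniformly on $[0,\gamma]$: $\widehat F_{nk}$ is identically zero on $[0,\tau_{nk1})$, where $\tau_{nk1}$ is the first jump, so the ratio $\widehat F_{nk}/G$ vanishes there for every $n$. Moreover, the Appendix lemma does \emph{not} supply any such bound; Lemma~A.\ref{lem:neg} controls the probability that $\int_{\tau_{nlj}^-}^{s_{njM}}(\widehat F_{n+}-F_{0+})\,dG<0$ on the event $A_{njM}^+$, which is a piece of the peeling argument for $\widehat F_{n+}$ and has nothing to do with pointwise lower bounds on $\widehat F_{nk}$. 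Consequently the fluctuation term $\int_I dW_{nk}/\widehat F_{nk}$ in your decomposition is not controlled near $0$, and the contradiction you aim for does not close.

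The paper avoids this singularity by a different, purely algebraic, move: instead of centring (\ref{equ:char1}) as written, it multiplies through by $\widehat F_{nk}$ to obtain the equivalent form~(\ref{equ:ch1}), so that only $\widehat F_{n,K+1}$ remains in a denominator --- and $\widehat F_{n,K+1}$ is bounded below on $[0,\gamma]$ by consistency plus $F_{0+}(\gamma)<1$. Centring then produces the martingales $M_{nk}$ of~(\ref{equ:mart1}), whose only denominator is the deterministic $F_{0,K+1}$. The structure of the argument also differs from yours in two further respects: (i) the paper first proves the rate for the \emph{sum} $\widehat F_{n+}$ (Lemma~\ref{lem:lunirate}), and only afterwards, conditioning on the event $D_{nC^*}=\{\|\widehat F_{n+}-F_{0+}\|_{[0,\gamma)}\leq C^* r_n\}$, deduces it for each $\widehat F_{nk}$ (Lemma~\ref{lem:crlunirate}); the sum step is needed because the drift in (\ref{equ:mart-a}) contains a cross term in $\widehat F_{n+}-F_{0+}$ that must be handled before the $k$th coordinate can be isolated; and (ii) the fluctuation is controlled not by bracketing entropy but by an exponential submartingale bound (Doob's inequality, Lemma~\ref{lem:martbase}), applied on a grid of spacing $n^{-1/3}$; summing the resulting tails $\exp(-d_{2b}M^3\log n)$ over $O(n^{1/3})$ grid points is what forces $a_n=n^{-1/3}\log^{1/3}n$ rather than $n^{-1/3}$.
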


In the particular case of interval censored data ($K=1$) we use the notations $F_0$ is the true distribution function of failure time and $\widehat F_n$ is the corresponding MLE. The refined uniform rate of convergence result for the MLE in the interval censored data model is given in the following corollary.

\begin{corollary}
\label{cor:unirate}
Let $K=1$, and the conditions of Theorem \ref{teo:unirate} hold uniformly for all $\gamma<\gamma_+$. Then
\begin{equation*}
\|\widehat F_n-F_0\|=O_P(n^{-1/3}\log^{1/3} n). 
\end{equation*}
\end{corollary}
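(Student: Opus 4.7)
The plan is to apply Theorem~\ref{teo:unirate} on an expanding family of subintervals $[0,\gamma_n]$ with $\gamma_n\uparrow\gamma_+$, and then to extend the bound to the left and right tails by monotonicity, exploiting the fact that for $K=1$ the function $F_0$ is the genuine distribution function of the failure time, so $F_0(\infty)=1\geq\widehat F_n(\infty)$.

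First I would pick $\gamma_n<\gamma_+$ with $F_0(\gamma_+)-F_0(\gamma_n)=O(n^{-1/3}\log^{1/3} n)$. The uniform density bound $dF_0/dG\leq 1/\varepsilon$ on $(0,\gamma_+)$ together with absolute continuity of $G$ makes this possible, since $F_0(\gamma_+)-F_0(\gamma_n)\leq\varepsilon^{-1}(G(\gamma_+)-G(\gamma_n))$. Because $\varepsilon$ does not depend on $\gamma$, Theorem~\ref{teo:unirate} then supplies $\|\widehat F_n-F_0\|_{[0,\gamma_n]}=O_P(n^{-1/3}\log^{1/3} n)$ with an $O_P$-constant that may be taken independent of $\gamma_n$.

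Next, for $t>\gamma_n$ I would use monotonicity of $\widehat F_n$ and $F_0$. From above,
$$
\widehat F_n(t)-F_0(t)\leq \widehat F_n(\infty)-F_0(\gamma_n)\leq 1-F_0(\gamma_n)=F_0(\infty)-F_0(\gamma_n)=O(n^{-1/3}\log^{1/3} n),
$$
the identity $F_0(\infty)=1$ being where the hypothesis $K=1$ enters. From below,
$$
F_0(t)-\widehat F_n(t)\leq F_0(\infty)-\widehat F_n(\gamma_n)=\bigl[F_0(\infty)-F_0(\gamma_n)\bigr]+\bigl[F_0(\gamma_n)-\widehat F_n(\gamma_n)\bigr]=O_P(n^{-1/3}\log^{1/3} n).
$$
For $t<0$, in the standard setting $F_0(t)=0$ and the theorem at $t=0$ together with monotonicity yield $0\leq\widehat F_n(t)\leq\widehat F_n(0)=F_0(0)+O_P(n^{-1/3}\log^{1/3} n)=O_P(n^{-1/3}\log^{1/3} n)$.

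The step I expect to be the main obstacle is checking that the $O_P$-constant from Theorem~\ref{teo:unirate} can indeed be chosen uniformly in $\gamma_n\uparrow\gamma_+$, so that it does not deteriorate as the interval expands. This reduces to confirming that the entropy and maximal-inequality bounds driving the theorem's proof depend only on the universal constant $\varepsilon$ and not on the length of $[0,\gamma_n]$, which is typical for classes of uniformly bounded monotone functions. Should uniformity fail, a fallback would be to fix $\gamma^*<\gamma_+$ and to control $\widehat F_n(\infty)-\widehat F_n(\gamma^*)$ directly from the MLE characterization~(\ref{equ:char}), noting that $\widehat F_n$ is piecewise constant with jumps only at observation times.
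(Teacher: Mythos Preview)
Your monotonicity argument on the tail is fine, but the crucial uniformity claim fails, and this is not a matter of entropy bounds for monotone classes. Trace the constants through the proof of Theorem~\ref{teo:unirate}: the martingale $M_{nk}$ in~(\ref{equ:mart1}) carries the factor $F_{0k}(u)/F_{0,K+1}(u)$, and for $K=1$ this is $F_0(u)/(1-F_0(u))$, which blows up as $u\to\gamma_+$. In Lemma~\ref{lem:martingale} the remainder bounds involve $\epsilon_\gamma^{-2}$ with $\epsilon_\gamma=1-F_0(\gamma)$, and in Lemma~\ref{lem:martbase} the exponential-moment argument requires $\zeta_{nk}(T,\Delta)=\Delta-\frac{F_0(T)\bar\Delta}{1-F_0(T)}$ to be bounded on $V_r(t_0)$. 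Both constants deteriorate exactly like $(1-F_0(\gamma))^{-1}$ or worse. With your choice $1-F_0(\gamma_n)\asymp n^{-1/3}\log^{1/3}n$, these factors are of order $n^{1/3}$ and destroy the rate on $[0,\gamma_n]$; if instead you keep $1-F_0(\gamma_n)$ bounded away from zero, the tail bound $1-F_0(\gamma_n)$ is no longer $o(1)$. There is no intermediate choice that rescues the argument.

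The paper's proof sidesteps this by treating a neighborhood of $\gamma_+$ directly, exploiting two features available only when $K=1$. First, the characterization inequalities (\ref{equ:char1})--(\ref{equ:char2}) hold for \emph{all} $s>0$, not just $s<T_{(n)}$, so one can work from the right. Second, and this is the key asymmetry, near $\gamma_+$ one switches to the martingale
\[
M_n(t)=\int_{u\leq t}\frac{(1-F_0(u))(\delta-F_0(u))}{F_0(u)}\,dP_n(u,\delta)-\int_{u\leq t}(\bar\delta-(1-F_0(u)))\,dP_n(u,\delta),
\]
which is weighted by $1/F_0$ rather than $1/(1-F_0)$ and is therefore bounded precisely where your approach breaks down. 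With this martingale one reruns the local argument of Lemma~\ref{lem:lunirate} (left-hand-side version) on $(1-r,1)$ after the Smirnov transform, and then patches together with Theorem~\ref{teo:unirate} on the rest. Your fallback of appealing to~(\ref{equ:char}) is in the right spirit, but it is this reweighting, not the characterization alone, that does the work.
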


\noindent
\begin{remark} 
The uniform convergence rate in Corollary \ref{cor:unirate}  is more precise then one obtained in \citet[Lemma 5.9]{GW92}. 
\end{remark}

The $L^1(G([0,\gamma]))$-norm rate of convergence result for the MLE of the survival function $S$ in the current status right-censored data model obtained by \citet{mal19}. Here we apply Theorem \ref{teo:unirate} in order to get the same rate of uniform convergence on the interval $[0,\gamma]$. 

\begin{corollary}
\label{teo:funirate}
Let \ $G$ \ is absolutely \ continuous; \ $S^*<\!\!< G$, \ $Q^*<\!\!< G$ \ and $\varepsilon\leq\frac{dS^*}{dG},\frac{dQ^*}{dG}\leq 1/\varepsilon$ on the interval $[0,\gamma]$ for some $\varepsilon>0$ and $\gamma<\gamma_+$, where $S^*\equiv 1-S$ and $Q^*\equiv 1-Q$. Then
\begin{equation}
\label{equ:rate}
\|\widehat S_n-S\|_{[0,\gamma]}=O_P(n^{-1/3}\log^{1/3} n).
\end{equation}
\end{corollary}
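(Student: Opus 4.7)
\smallskip
\noindent\textbf{Proof plan.} The approach is to transfer the uniform rate from Theorem~\ref{teo:unirate} to $\widehat S_n$ through the product-limit representation (\ref{equ:reconstr}).

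First, I would verify that the hypotheses of Corollary~\ref{teo:funirate} imply those of Theorem~\ref{teo:unirate} with $K=2$. Under the standard independence assumptions of the censoring model, a direct computation gives
\begin{equation*}
\frac{dF_{01}}{dG}(x) = \frac{dS^*}{dG}(x) \cdot \psi_1(x), \qquad \frac{dF_{02}}{dG}(x) = \frac{dQ^*}{dG}(x) \cdot \psi_2(x),
\end{equation*}
where $\psi_1(x) = \Pr(U^\circ \geq x,\, T \geq U^\circ \mid T^\circ = x)$ and $\psi_2$ is the symmetric counterpart. Both factors lie in $(0,1]$ and, since $\gamma < \gamma_+$, are bounded below by a positive constant on $[0,\gamma]$. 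Hence Theorem~\ref{teo:unirate} applies (with a possibly smaller $\varepsilon$) and yields
\begin{equation*}
\|\widehat F_{n1} - F_{01}\|_{[0,\gamma]} + \|\widehat F_{n2} - F_{02}\|_{[0,\gamma]} = O_P\bigl(n^{-1/3}\log^{1/3} n\bigr).
\end{equation*}

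Next, I would pass to cumulative hazards: $\Lambda(t) = \int_0^t dF_{01}/F_{03-}$ and $\widehat \Lambda_n(t) = \int_0^t d\widehat F_{n1}/\widehat F_{n,3-}$, so that $S = \prodi(1-d\Lambda)$ and $\widehat S_n = \prodi(1-d\widehat \Lambda_n)$. Since $\gamma < \gamma_+$ forces $F_{03}(\gamma) > 0$, and the uniform rate just obtained forces $\widehat F_{n,3-}$ to be bounded below by some positive constant $c$ on $[0,\gamma]$ with probability tending to one, the decomposition
\begin{equation*}
\widehat \Lambda_n(t) - \Lambda(t) = \int_0^t \frac{d(\widehat F_{n1} - F_{01})(s)}{\widehat F_{n,3-}(s)} + \int_0^t \frac{F_{0+}(s^-) - \widehat F_{n+}(s^-)}{\widehat F_{n,3-}(s)\,F_{03-}(s)}\,dF_{01}(s)
\end{equation*}
is well defined. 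Integration by parts controls the first term by $\|\widehat F_{n1}-F_{01}\|_{[0,\gamma]}$ times the uniform bound and the total variation of $1/\widehat F_{n,3-}$ on $[0,\gamma]$, both of order $O_P(1)$ in view of $\mathrm{TV}(1/\widehat F_{n,3-}) \leq \widehat F_{n+}(\gamma)/c^2$. The second term is bounded directly by $F_{01}(\gamma)\,c^{-2}\,\|\widehat F_{n+} - F_{0+}\|_{[0,\gamma]}$. Combining yields $\|\widehat \Lambda_n - \Lambda\|_{[0,\gamma]} = O_P(n^{-1/3}\log^{1/3} n)$.

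Finally, the classical Lipschitz bound for the product-integral (Duhamel's equation) gives
\begin{equation*}
|\widehat S_n(t) - S(t)| \leq C\,\|\widehat \Lambda_n - \Lambda\|_{[0,t]}, \qquad t \in [0,\gamma],
\end{equation*}
where $C$ depends only on $\Lambda(\gamma)$ and $\inf_{[0,\gamma]} S > 0$, and (\ref{equ:rate}) follows. The main obstacle will be the careful bookkeeping in the integration-by-parts step, owing to the step-function nature of $\widehat F_{n1}$ and $\widehat F_{n,3-}$ with potentially coincident jump times, together with the certification that the denominator $\widehat F_{n,3-}$ stays uniformly bounded away from zero on $[0,\gamma]$ on an event of asymptotic probability one.
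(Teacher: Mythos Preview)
Your proposal is correct and follows essentially the same route as the paper: invoke Theorem~\ref{teo:unirate} for the uniform rate on $\widehat F_{n1},\widehat F_{n2}$, then pass to $\widehat S_n$ via Duhamel together with an integration-by-parts argument that exploits the monotonicity of $1/\widehat F_{n,3-}$ (the paper writes this factor as $1/\widehat Q_n(u_-)$) and the positive lower bound on $\widehat F_{n,3-}$ over $[0,\gamma]$. The only organisational difference is that you first control $\|\widehat\Lambda_n-\Lambda\|_{[0,\gamma]}$ and then apply the product-integral Lipschitz bound, whereas the paper applies Duhamel to $\widehat S_n-S$ directly and decomposes afterwards; you also spell out the reduction from the hypotheses on $S^*,Q^*$ to those of Theorem~\ref{teo:unirate}, which the paper leaves implicit.
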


\section{Proofs}
\label{sec:proof}

In order to prove Theorem \ref{teo:unirate} we need several auxiliary results. The following local convergence result is quite different to \citet[Theorem 4.10]{GMW08a}, but its proof is very similar. 

\begin{lemma}
\label{lem:lunirate}
Let $0\leq F_{0+}(t_0)<F_{0+}(\infty)$; $G$ and $F_{0k}$, $k=1,\ldots,K$, be continuously differentiable at $t_0$ with positive and bounded away from zero derivatives in a neighborhood $V_{r}(t_0)$ for some $r>0$, where $V_{r}(t_0)=(t_0-r,t_0+r)$ if $F_{0+}(t_0)>0$, and $V_{r}(t_0)=(t_0,t_0+r)$ if $F_{0+}(t_0)=0$. Then there exists a constant $r>0$, such that 
\begin{equation}
\label{equ:unilocrate}
\sup\nolimits_{t\in V_{r}(t_0)} |\widehat F_{n+}(t)-F_{0+}(t)|=O_P(n^{-1/3}\log^{1/3} n),
\end{equation}
where $\widehat F_{n+}\equiv\sum_{k=1}^K \widehat F_{nk}$.
\end{lemma}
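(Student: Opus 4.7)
The plan is to follow the approach of \cite[Theorem~4.10]{GMW08a} but uniformize over $V_r(t_0)$ via a discretization plus Bernstein bound, producing the factor $\log^{1/3} n$. The mechanism is the standard inversion trick for isotonic-type estimators: a deviation of $\widehat F_{n+}$ of size $a_n$ persists on a right-neighborhood of length of order $a_n$ (by the upper bound on the derivative of $F_{0+}$); applying the MLE characterization (\ref{equ:char1})--(\ref{equ:char2}) at a jump-point of the offending component, and separating deterministic drift from empirical-process increment, forces a lower bound on a local empirical-process increment; Bernstein's inequality then supplies a tail bound whose balancing with the $O(n^{1/3})$ covering of $V_r(t_0)$ yields $a_n \sim (n^{-1}\log n)^{1/3}$.

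Set $a_n = M n^{-1/3}\log^{1/3} n$ for large $M$; by symmetry it suffices to show $\Pr\{\sup_{t\in V_r(t_0)}(\widehat F_{n+}(t)-F_{0+}(t))>a_n\}\to 0$. On this event, the pigeonhole principle in $k$ yields an index with $\widehat F_{nk}(\tau_{nk})-F_{0k}(\tau_{nk})>a_n/(2K)$ at some jump-point $\tau_{nk}\in V_r(t_0)$, and this excess persists on $I=[\tau_{nk},\tau_{nk}+\eta_n)$ with $\eta_n\geq c\,a_n$ by the Lipschitz property of $F_{0k}$. Applying (\ref{equ:char1}) on $I$ and splitting the integrand into the centred empirical-process increment and its deterministic counterpart (with $V_k(u)=\int_{-\infty}^u F_{0k}\,dG$, $V_{K+1}(u)=\int_{-\infty}^u(1-F_{0+})\,dG$), the deterministic part---using the bounded-density assumption together with $\widehat F_{nk}\geq F_{0k}+a_n/(2K)$ and $\widehat F_{n,K+1}\leq F_{0,K+1}-a_n$ on $I$---is at most $-c'\eta_n a_n$, so the empirical-process increment must exceed $c'\eta_n a_n\geq c'' a_n^2$.

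Finally, by Bernstein's inequality, such an empirical-process increment on an interval of length of order $a_n$ has variance of order $a_n/n$, giving a tail probability of order $\exp(-c'''n a_n^3)=n^{-c'''M^3}$; a union bound over $O(n^{1/3})$ subintervals of length $n^{-1/3}$ covering $V_r(t_0)$, combined with a peeling argument over dyadic scales of $\eta_n$ between $a_n$ and $r$, yields an overall bound that tends to $0$ for $M$ large enough. The main obstacle I expect is the deterministic-drift step: one needs a deterministic lower bound $\widehat F_{nk}\geq cF_{0k}/2$ on $V_r(t_0)$ that holds with probability tending to one, which is not immediately supplied by the $L^2$-rate (\ref{equ:qrate}); and in the one-sided boundary case $F_{0+}(t_0)=0$, $F_{0k}$ vanishes at $t_0$ and the denominators in the characterization must be handled by restricting attention to $t$ with $F_{0+}(t)\geq a_n$, where the persistence inequality $\widehat F_{n+}(t)\geq F_{0+}(t)+a_n$ itself supplies a lower bound on the denominators.
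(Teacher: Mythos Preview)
Your overall architecture (discretize $V_r(t_0)$ at scale $n^{-1/3}$, use the characterization at a jump point, separate drift from noise, apply an exponential bound, sum) is exactly the one in the paper. However, the deterministic-drift step, which you already flag as the obstacle, has a genuine gap that is not the one you identify.

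You assert that on $I=[\tau_{nk},\tau_{nk}+\eta_n)$ one has simultaneously $\widehat F_{nk}\geq F_{0k}+a_n/(2K)$ \emph{and} $\widehat F_{n,K+1}\leq F_{0,K+1}-a_n$. The first is fine: $\widehat F_{nk}$ is flat to the right of its jump $\tau_{nk}$ and $F_{0k}$ is Lipschitz. The second is not: $\widehat F_{n,K+1}\leq F_{0,K+1}-a_n$ means $\widehat F_{n+}\geq F_{0+}+a_n$, and you only know this at the single point $t$ where the supremum is (nearly) attained, not on all of $I$. The interval $I$ begins at the last jump of $\widehat F_{nk}$ before $t$, possibly far to the left, and on $[\tau_{nk},t)$ the other components $\widehat F_{nj}$, $j\neq k$, may be well below $F_{0j}$, so that $\widehat F_{n+}-F_{0+}$ is negative there. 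In the linearized drift the two pieces are $\int(\widehat F_{nk}-F_{0k})\,dG$ and $\int \tfrac{F_{0k}}{F_{0,K+1}}(\widehat F_{n+}-F_{0+})\,dG$; the second can kill the first. The paper does not avoid this: it splits on the sign of $\int_{[\tau_{nkj}^-,s_{njM})}(\widehat F_{n+}-F_{0+})\,dG$ (see (\ref{equ:mrtf1})--(\ref{equ:mrtf2})) and devotes a separate combinatorial argument (Lemma~A.\ref{lem:neg}) to the negative case, ordering the last jump points $\tau_{n1j}^-\le\cdots\le\tau_{nKj}^-$ of all $K$ components and iterating the characterization across them. Your sketch contains no analogue of this step, and without it the drift bound $-c'\eta_n a_n$ fails.

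A second, smaller issue: after splitting $dV_{nk}=F_{0k}\,dG+d(V_{nk}-V_k)$, the ``empirical-process increment'' you want to hit with Bernstein still carries the \emph{random} weights $1/\widehat F_{nk}$ and $1/\widehat F_{n,K+1}$, so it is not a sum of i.i.d.\ terms. The paper removes the random denominators first, obtaining the martingale $M_{nk}$ of (\ref{equ:mart1}) with deterministic weights, and pays for this with remainder terms $\mathcal R_{nk}$ controlled at order $O_P(n^{-2/3})$ via the $L^2$-rate and \cite[Lemma~5.13]{vdg00} (Lemma~\ref{lem:martingale}). Your acknowledged obstacle ``$\widehat F_{nk}\ge cF_{0k}/2$'' is a symptom of working with the raw form (\ref{equ:char1}); the paper sidesteps it entirely by passing to (\ref{equ:ch1}) and then to (\ref{equ:mart-a})--(\ref{equ:mart-b}), so that no a priori lower bound on $\widehat F_{nk}$ is needed, even at the boundary $F_{0+}(t_0)=0$.
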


Let $\tau_{nk1}<\ldots<\tau_{nkm_k}$ be the successive jump points  of $\widehat F_{nk}$. Taking account of $\widehat F_{nk}(t)=\widehat F_{nk}(\tau_{nki})$ for all $t\in [\tau_{nki},\tau_{nk,i+1})$ we get from (\ref{equ:char1}) that for all $s<T_{(n)}$, for any point of jump $\tau_{nk}$ of $\widehat F_{nk}$
\begin{equation}
\label{equ:ch1}
\int_{[\tau_{nk},s)}\delta_i\, dP_n(t,\delta)-\int_{[\tau_{nk},s)}\frac{\widehat F_{nk}(t)\bar\delta}{\widehat F_{n,K+1}(t)}dP_n(t,\delta)\geq 0
\end{equation}
with the equality holds if $s$ is a point of jump of $\widehat F_{nk}$. The inequality (\ref{equ:ch1}) is applicable to obtain the local uniform rate of convergence result for any point $t_0<\gamma$, unlike (\ref{equ:char1}), which is not applicable under $F_{0+}(t_0)=0$. 

In order to prove Lemma \ref{lem:lunirate} we are following  \citet[proof of Theorem 4.10]{GMW08a} with another rate of convergence $a_n=n^{-1/3}\log^{1/3} n$ (instead of $v_n(t)$ in \citet{GMW08a}, equation (31)) and another martingales 
\begin{equation}
\label{equ:mart1}
\begin{array}{rcl}
M_{nk}(t)&=&\di\int_{u\leq t} \!(\delta_k-F_{0k}(u))dP_n(u,\delta)\\
&-&\di\int_{u\leq t}\!\frac{F_{0k}(u)(\bar\delta-F_{0,K+1}(u))}{F_{0,K+1}(u)}dP_n(u,\delta)
\end{array}
\end{equation}
(instead of \citet{GMW08a}, equation (18)). The following lemmas precedes the proof of Lemma \ref{lem:lunirate}. 

\begin{lemma}
\label{lem:martingale}
Let $\gamma<\gamma_+$ be a fixed constant. Then under the conditions of Lemma \ref{lem:lunirate}, at each jump point $\tau_{nk}$ of $\widehat F_{nk}$ 
\begin{equation}
\label{equ:mart-a}
\begin{array}{r}
\di 
\int_{[\tau_{nk},s)} \Bigl((\widehat F_{nk}(t)-F_{0k}(t)) + \frac{F_{0k}(t)(\widehat F_{n+}(t)-F_{0+}(t))}{F_{0,K+1}(t)}\Bigr)dG(t)\\
\di\leq \int_{[\tau_{nk},s)}  dM_{nk}(t)+{\cal R}_{nk}(\tau_{nk},s)
\end{array}
\end{equation}
and \vspace{-3mm}
\begin{equation}
\label{equ:mart-b}
\begin{array}{r}
\di 
\int_{[t,\tau_{nk})} \Bigl((\widehat F_{nk}(t)-F_{0k}(t)) + \frac{F_{0k}(t)(\widehat F_{n+}(t)-F_{0+}(t))}{F_{0,K+1}(t)}\Bigr)dG(t)\\
\di\geq \int_{[t,\tau_{nk})}  dM_{nk}(w)+{\cal R}_{nk}(t,\tau_{nk})
\end{array}
\end{equation}
for all $s<T_{(n)}$ and $t>T_{(1)}$, where
\begin{equation*}
\sup\nolimits_{t,s\in V_{r}(t_0): t<s} (|{\cal R}_{nk}(t,s)|)= O_P(n^{-2/3})
\end{equation*} 
for all $k=1,\ldots,K$ and some $r>0$. 
\end{lemma}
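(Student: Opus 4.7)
The plan is to derive (\ref{equ:mart-a}) by massaging the characterization (\ref{equ:ch1}) at a jump point $\tau_{nk}$ until a $dM_{nk}$ contribution is exposed and the remaining terms match the left-hand side of (\ref{equ:mart-a}); the companion inequality (\ref{equ:mart-b}) then follows symmetrically from (\ref{equ:char2}).

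For the algebraic reduction, I would write $\delta_k = (\delta_k - F_{0k}) + F_{0k}$ and $\bar\delta = (\bar\delta - F_{0,K+1}) + F_{0,K+1}$, and use the identity
\[
F_{0k}\widehat F_{n,K+1} - \widehat F_{nk} F_{0,K+1} = -F_{0,K+1}(\widehat F_{nk}-F_{0k}) - F_{0k}(\widehat F_{n+}-F_{0+}),
\]
which is immediate from $F_{0,K+1}=1-F_{0+}$ and $\widehat F_{n,K+1}=1-\widehat F_{n+}$. Combining these, the integrand of (\ref{equ:ch1}) splits as the $dM_{nk}$-integrand minus $\bar\delta\bigl((\widehat F_{nk}-F_{0k})/\widehat F_{n,K+1} + F_{0k}(\widehat F_{n+}-F_{0+})/(F_{0,K+1}\widehat F_{n,K+1})\bigr)$. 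I would then pass from the empirical to the deterministic side by replacing $\bar\delta\,dP_n$ with $F_{0,K+1}\,dG$ (legitimate since $\E[\bar\delta\mid T=t]=F_{0,K+1}(t)$) and by replacing $\widehat F_{n,K+1}$ in the denominators by $F_{0,K+1}$. Under the hypotheses of Lemma~\ref{lem:lunirate} both $F_{0,K+1}$ and, with probability tending to one, $\widehat F_{n,K+1}$ are bounded away from $0$ on $V_r(t_0)$, so the substitutions are sound; moreover, the weight $F_{0,K+1}/\widehat F_{n,K+1}$ then collapses to~$1$, leaving exactly the integrand on the left-hand side of (\ref{equ:mart-a}).

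All discrepancies incurred by the two substitutions are dumped into ${\cal R}_{nk}(t,s)$. It splits into (i) an empirical-process correction of the form $\int_{[t,s)} g_n\,d(P_n-P_0)$, where $g_n$ is a function of bounded variation built from $\widehat F_{nk}-F_{0k}$, $\widehat F_{n+}-F_{0+}$ and the (deterministic, bounded-away-from-zero) envelope $F_{0,K+1}$, and satisfies $\|g_n\|_2 = O_P(n^{-1/3})$ by (\ref{equ:qrate}); and (ii) a plug-in term of the form $\int_{[t,s)} g_n\,(\widehat F_{n,K+1}-F_{0,K+1})\,dP_n$, controlled by Cauchy--Schwarz as $O_P(\|g_n\|_2 \cdot \|\widehat F_{n,K+1}-F_{0,K+1}\|_2) = O_P(n^{-2/3})$. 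For (i) one invokes a maximal inequality of Birg\'e--Massart/van de Geer type for the bounded-variation class, giving $\sup_{t<s\in V_r(t_0)}|\int_{[t,s)} g_n\,d(P_n-P_0)| = O_P(n^{-1/2}\|g_n\|_2) = O_P(n^{-5/6}) = o_P(n^{-2/3})$. The main obstacle is precisely this uniformity: the jump point $\tau_{nk}$ is data-dependent and $s,t$ range over $V_r(t_0)$, so one needs a uniform bracketing-entropy bound for a class containing $\{g_n\}$ with high probability, achieved in the same spirit as in the proof of \citet[Theorem~4.10]{GMW08a}.
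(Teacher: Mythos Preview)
Your decomposition is exactly the one the paper uses: rewrite the left side of (\ref{equ:ch1}) as $M_{nk}([\tau_{nk},s))-I_n(\tau_{nk},s)$ with $I_n$ the $\bar\delta\,dP_n$-integral of $\widehat F_{nk}/\widehat F_{n,K+1}-F_{0k}/F_{0,K+1}$, split $I_n=I_n^{(1)}+I_n^{(2)}$ via your identity, and then replace $\widehat F_{n,K+1}$ by $F_{0,K+1}$ in the denominators and $\bar\delta\,dP_n$ by $F_{0,K+1}\,dG$, collecting the four resulting corrections (the paper's $\rho^{(1)},\ldots,\rho^{(4)}$) into ${\cal R}_{nk}$.

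Two small corrections. First, your rate for (i) is too optimistic: the bracketing entropy of the bounded-variation class is of order $1/\epsilon$, so the entropy integral scales like $\sigma^{1/2}$, and the maximal inequality yields $O_P(n^{-1/2}\|g_n\|_2^{1/2})=O_P(n^{-2/3})$, not $O_P(n^{-1/2}\|g_n\|_2)=O_P(n^{-5/6})$. This is precisely the rate the paper obtains from \citet[Lemma~5.13]{vdg00} with $\alpha=1$, $\beta=0$, and it is exactly the $O_P(n^{-2/3})$ required by the lemma---so nothing is lost, but the sharper claim is false. Second, your plug-in term (ii) is an integral against $\bar\delta\,dP_n$, not against $dG$, so a direct Cauchy--Schwarz in $L^2(G)$ is not immediately available; the paper handles this (for $\rho^{(1)}$ and $\rho^{(3)}$) by first splitting off an empirical-process piece $\int(\cdot)(\bar\delta\,dP_n-F_{0,K+1}\,dG)$ of type (i) and only then applying Cauchy--Schwarz to the remaining $dG$-integral.
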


\begin{proof}
Taking into account (\ref{equ:mart1}) the left hand side of (\ref{equ:ch1}) can be rewritten as $M_{nk}([\tau_{n,i},s))-I_n(\tau_{n,i},s)$, where
$$
I_n(t,s)=\int_{[t,s)} \Bigl(\frac{\widehat F_{nk}(u)}{\widehat F_{n,K+1}(u)}- \frac{F_{0k}(u)}{F_{0,K+1}(u)}\Bigr)\bar\delta \,dP_n(u,\delta), 
$$
and $I_n(t,s)=I_n^{(1)}(t,s)+I_n^{(2)}(t,s)$, where 
$$
I_n^{(1)}(t,s)=\int_{[t,s)}\frac{\widehat F_{nk}(u)-F_{0k}(u)}{\widehat F_{n,K+1}(u)}\bar\delta\,dP_n(u,\delta),
$$
and
\vspace{-.3em}
$$
I_n^{(2)}(t,s)=\int_{[t,s)}\frac{F_{0k}(u)(\widehat F_{n+}(u)-F_{0+}(u))}{F_{0,K+1}(u)\widehat F_{n,K+1}(u)}\bar\delta \,dP_n(u,\delta). 
$$
Moreover,
$
I_n^{(1)}(t,s)=\int_{[t,s)}(\widehat F_{nk}(u)-F_{0k}(u))dG(u)+\rho^{(1)}(t,s)+\rho^{(2)}(t,s),
$
where \vspace{-.2em}
$$
\rho^{(1)}(t,s)=-\int_{[t,s)} \frac{(\widehat F_{nk}(u)-F_{0k}(u))(\widehat F_{n+}(u)-F_{0+}(u))}{
\widehat F_{n,K+1}(u) F_{0,K+1}(u)}\bar\delta \, dP_n(u,\delta),
$$
$$
\rho^{(2)}(t,s)=\int_{[t,s)} \frac{(\widehat F_{nk}(u)-F_{0k}(u))(\bar\delta dP_n(u,\delta)-F_{0,K+1}(u)dG(u))}{F_{0,K+1}(u)},
$$
and $I_n^{(2)}(t,s)=\int_{[t,s)}\frac{F_{0k}(u)(\widehat F_{n+}(u)-F_{0+}(u))}{F_{n,K+1}(u)}dG(u)+\rho^{(3)}(t,s)+\rho^{(4)}(t,s)$, where 
$$
\rho^{(3)}(t,s)=\int_{[t,s)} \frac{F_{0k}(u)(\widehat F_{nk}(u)-F_{0k}(u))^2}{F_{0,K+1}(u)^2\widehat F_{n,K+1}(u)}\bar\delta dP_n(u,\delta),
$$
$$
\rho^{(4)}(t,s)\!=\!\int_{[t,s)}\!\!\! \frac{F_{0k}(u)(\widehat F_{n+}(u))\!-\! F_{0+}(u))(\bar\delta\, dP_n(u,\delta) \!-\! F_{0,K+1}(u)dG(u))}{F_{0,K+1}(u)^2}.
$$
Hence, ${\cal R}_{nk}(t,s)=\rho^{(1)}(t,s)+\rho^{(2)}(t,s)+\rho^{(3)}(t,s)+\rho^{(4)}(t,s)$. 

Note that $\E \int_0^t \bar\delta dP_n(t,\delta)=\int_0^t F_{0,K+1}(u)dG(u)$ for all $t\geq 0$. Taking account of $F_{0,K+1}(s)>F_{0,K+1}(\gamma)=\epsilon_{\gamma}$ for some $\epsilon_{\gamma}>0$ and consistency of $\widehat F_{n+}$ [\citet[Proposition 3.3]{GMW08a}] we can write that 
$$
\begin{array}{l}
\di
|\rho^{(1)}(s,t)| 
\leq 4\epsilon_{\gamma}^{-2}\Bigl| \int_{[t,s)} (\widehat F_{nk}(u)-F_{0k}(u))(\widehat F_{0+}(u)\\
\hspace{+30.8ex}
\di -\,F_{0+}(u))(\bar\delta\, dP_n(u,\delta) \!-\! F_{0,K+1}(u)dG(u))\Bigr|
\\\di \hspace{+10.8ex}
+\, 2\epsilon_{\gamma}^{-1}\int_{[t,s)} |\widehat F_{nk}(u) -F_{0k}(u)||\widehat F_{n+}(u)-F_{0+}(u)|dG(u)
\end{array}
$$ 
for sufficiently large $n$ almost sure. Then we apply \cite[Lemma 5.13]{vdg00} with $\alpha=1$ and $\beta=0$ and (\ref{equ:qrate}) to obtain the required rate of convergence $O_P(n^{-2/3})$ for the first summand in the right hand side of the last inequality. By the Cauchy-Schwarz inequality and (\ref{equ:qrate}),
\begin{eqnarray*}
\int_{[t,s)}\!\! \!\!|\widehat F_{nk}(u)\!-\! F_{0k}(u)||\widehat  F_{n+}(u)&-&F_{0+}(u)|dG(u)\\
&\leq &{\|\widehat  F_{nk}\!- F_{0k}\|}_2 {\|\widehat  F_{n+}\!- F_{0+}\|}_2\!=\!O_P(n^{-2/3}).
\end{eqnarray*}
Therefore, $\rho^{(1)}(t,s)=O_P(n^{-2/3})$ uniformly for all $t,s\in V_{r}(t_0)$: $t<s$. Similarly, we obtain  $\rho^{(3)}(t,s)=O_P(n^{-2/3})$, and $\rho^{(2)}(t,s)=O_P(n^{-2/3})$, $\rho^{(4)}(t,s)=O_P(n^{-2/3})$ follows immediately from (\ref{equ:qrate}) by \citet[Lemma 5.13]{vdg00}. Hence, $\sup_{t,s\in V_{r}(t_0): t<s}|{\cal R}_{nk}(t,s)|=O_P(n^{-2/3})$ for some $r>0$. Finally, (\ref{equ:char1}) implies (\ref{equ:mart-a}), and (\ref{equ:char2}) implies (\ref{equ:mart-b}).
The lemma is proved. 
\end{proof}

\begin{lemma}
\label{lem:martbase}
Under the conditions of Lemma \ref{lem:lunirate} for any $b>0$, $s_n\in V_{r}(t_0)$
\begin{equation}
\label{equ:marti1}
\Pr\Bigl(\sup_{w\in V_{r}(t_0):w<s_n-Ma_n}\Bigl\{
\int_{[w,s_n)} dM_{nk}-b(s_{n}-w)^2\Bigr\}\geq 0\Bigr)\leq p_{jbM}
\end{equation}
and 
\begin{equation}
\label{equ:marti2}
\Pr\Bigl(\sup_{w\in V_{r}(t_0):w\geq s_n+Ma_n}\Bigl\{
\int_{(s_n,w]} dM_{nk}+b(w-s_{n})^2\Bigr\}\leq 0\Bigr)\leq p_{jbM},
\end{equation}
where $a_n\!=\!n^{-1/3}\log^{1/3}n$ and $p_{jbM}\!=\!d_1\exp(-d_{2b} M^3\log n)$ for some \hbox{$d_1,d_{2b}\!>\!0$.}
\end{lemma}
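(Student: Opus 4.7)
The plan is to write $\int_{[w,s_n)} dM_{nk}$ as $n^{-1}$ times a partial sum of bounded, mean-zero i.i.d.\ summands, then to combine a martingale Bernstein (Freedman) inequality with a dyadic peeling over the range $\{w\in V_r(t_0): s_n-w \geq Ma_n\}$.

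For fixed $s_n$, observe from (\ref{equ:mart1}) that
\begin{equation*}
Z_w \,:=\, \int_{[w,s_n)} dM_{nk} \,=\, \frac{1}{n}\sum_{i=1}^n \xi_i\, \indi_{\{w\leq T_i<s_n\}},
\end{equation*}
where $\xi_i = (\Delta_i^k - F_{0k}(T_i)) - F_{0k}(T_i)(\bar\Delta_i - F_{0,K+1}(T_i))/F_{0,K+1}(T_i)$ satisfies $\E[\xi_i\mid T_i]=0$. Since $F_{0+}(t_0)<F_{0+}(\infty)$ and $F_{0+}$ is continuous, for $r$ small enough $F_{0,K+1}\geq \varepsilon_0>0$ on $V_r(t_0)$, so $|\xi_i|\leq C$ and $\E[\xi_i^2\mid T_i]\leq C^2$ almost surely. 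Together with boundedness of the $G$-density on $V_r(t_0)$, this gives $\Var(Z_w)\leq C'(s_n-w)/n$.

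For the peeling, set $I_j = \{w\in V_r(t_0): 2^j Ma_n \leq s_n-w\leq 2^{j+1} Ma_n\}$ for $j=0,1,\ldots,J_n$ (finitely many, since $V_r(t_0)$ is bounded). If the supremum in (\ref{equ:marti1}) is nonnegative then for some $j$ and some $w\in I_j$ one has $Z_w\geq b(s_n-w)^2\geq b(2^j Ma_n)^2$. As $w$ decreases, $Z_w$ is a right-continuous step function jumping only at the observation times, and the partial sums form a reverse-time martingale with increments bounded by $C/n$. Restricting to the finitely many jump times inside $I_j$ and applying Freedman's martingale Bernstein inequality with variance budget $C'\cdot 2^{j+1}Ma_n/n$, using $n a_n^3=\log n$, gives
\begin{equation*}
\Pr\Bigl(\sup_{w\in I_j} Z_w \geq b(2^j Ma_n)^2\Bigr) \,\leq\, 2\exp\Bigl(-c_b\, 2^{3j} M^3 \log n\Bigr)
\end{equation*}
for some $c_b>0$ depending on $b$, provided $M$ is at least some absolute constant so that the variance term dominates the linear-in-deviation correction in the Bernstein denominator.

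Summing the dyadic bounds over $j\geq 0$ yields a geometric series dominated by its first term, producing $d_1\exp(-d_{2b} M^3 \log n)$, which is (\ref{equ:marti1}). Inequality (\ref{equ:marti2}) follows by an identical argument applied to the forward-time partial sum $w\mapsto \int_{(s_n,w]} dM_{nk}$ on $\{w\geq s_n+Ma_n\}$. The main technical hurdle will be the careful application of the maximal inequality so that the exponent comes out as $M^3 \log n$ and not $M^2 \log n$: this forces the use of Freedman/Bernstein rather than a sub-Gaussian bound, and requires checking that for the relevant dyadic scale the variance term $2^{j+1} M a_n/n$ dominates the bound-times-deviation correction $C b (2^j M a_n)^2/(3n)$ in the Bernstein denominator, which follows from $a_n\to 0$ and restricting $J_n$ so that $2^{J_n} M a_n$ stays bounded.
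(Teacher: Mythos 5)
Your proposal is correct and follows essentially the same route as the paper: both reduce the supremum to a shell decomposition of $\{w:\,s_n-w\ge Ma_n\}$ (yours dyadic, the paper's of width $n^{-1/3}$), apply a Doob-type maximal inequality to the exponential martingale of $\int dM_{nk}$ on each shell (you invoke Freedman's inequality, the paper computes and optimizes the moment generating function by hand), and use $na_n^3=\log n$ to extract the exponent $c\,2^{3j}M^3\log n$ before summing. The only point to make explicit in a full write-up is that the predictable quadratic variation on a shell is random (proportional to the number of $T_i$ falling in it), so the ``variance budget'' $C'2^{j+1}Ma_n/n$ must be enforced via a routine binomial tail bound whose contribution $\exp(-cn2^jMa_n)$ is negligible against $\exp(-c2^{3j}M^3\log n)$.
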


\begin{proof}
In order to prove (\ref{equ:marti1}) we set $t_{n0}=s_n-Ma_n$ and 
$
J_{nq}=[t_{nq},t_{n,q-1})
$,
where $t_{nq}=t_0-n^{-1/3} q$, $q=1,\ldots,q_{nr M}$, and $q_{nr M}:t_{n,j_{nr M}}\notin V_{r}(t_0)$. Then the left hand side of (\ref{equ:marti1}) is bounded above by 
\begin{equation}
\label{equ:martt1}
\sum\nolimits_{q=1}^{q_{nr M}} \Pr\Bigl(\sup_{t\in J_{nq}}\Bigl\{
\int_{[t,s_n)} dM_{nk}\geq b(s_{n}-t)^2\Bigr\}\Bigr).
\end{equation}
Introduce for each $\theta>0$ the reverse submartingale $\exp\bigl(n\theta \int_{[t,s_n)} dM_{nk}\bigr)$ for $t<s_n$ with respect to the filtration ${\cal F}_t=\{(T_i,\Delta_i),i=1,\ldots,n:T_i\geq t\}$. By Doob's submartingale inequality we obtain that 
\begin{eqnarray*}
&&\Pr\Bigl(\sup_{t\in J_{nq}}\Bigl\{
\exp\Bigl(nx\int_{[t,s_n)} dM_{nk}\Bigr)\geq \exp(nx b(s_{n}-t)^2)\Bigr\}\Bigr)\\
&&\hspace{+25ex}\leq  
\exp(-nx b(s_{n}-t)^2)\,\E \exp\Bigl(nx\int_{[t_{nq},s_n)} dM_{nk}\Bigr)
\end{eqnarray*}
Taking account of $S_{kn}$ is a sum of i.i.d. variables we can write that
$$
\E \exp\Bigl(n\theta\int_{[t_{nq},s_n)} dM_{nk}\Bigr)=\Bigl(\E \exp\bigl(\theta \indi_{[t_{nq},s_n)}(T) \zeta_{nk}(T,\delta)\bigr)\Bigr)^n,
$$
where \ \ $\zeta_{nk}(T,\Delta)=\Delta^k-\frac{F_{0k}(T)\Delta^{K+1}}{F_{0,K+1}(T)}$. \ \ Using \ \ the \ \ exponential \ \ series, $\E(\zeta_{nk}(T,\Delta)|T)=0$ and $\log(1+x)\leq x$ we obtain that the right hand side of the last equation is 
\begin{eqnarray*}
&&\exp\Bigl\{n\log\Bigl(1+\E\indi_{[t_{nj},s_n)}(T)\sum\nolimits_{l=2}^{\infty} \frac{x^l \zeta_{nk}(T,\Delta)^l}{l!}\Bigr)\Bigr\}\\
&&\hspace{+30ex}\leq 
\exp\Bigl\{\frac{1}{2}n x^2(s_n-t_{nq})f_n(x,t_{nq},s_n)\Bigr\},
\end{eqnarray*}
where 
$
f_n(x,c_1,c_2)=\frac{2}{(c_2-c_1)} \sum_{l=2}^{\infty} \frac{x^{l-2}}{l!}\int_{c_1}^{c_2} |\E( \zeta_{nk}(T,\Delta)|T=u)|dG(u).
$
Since $\zeta_{nk}(T,\Delta)$ given $T$ is bounded uniformly on $T\in V_{r}(t_0)$ and $x\mapsto f_n(x,c_1,c_2)$ is a continuous and strictly monotone increasing in $x$ function, there exists unique solution $
x_{c_1,c_2}$ of the equation  
$
x f_n(x,c_1,c_2)=b(c_2-c_1) 
$
and 
$
x_{c_1,c_2}\leq b(c_2-c_1)^2\Bigm/\int_{c_1}^{c_2} |\E( \zeta_{nk}(T,\Delta)^2|T=t)|dG(t).
$
Choosing $(c_1,c_2)=(t_{nq},s_n)$ we obtain that $q$-th summand in (\ref{equ:martt1}) is bounded above by 
$$
\exp\Bigl(-\frac{1}{2}nx_{t_{nq},s_n}b(s_n-t_{nq})^2\Bigr)\leq \exp\bigl(-nd_{2b}(s_n-t_{nq})^3\bigr),
$$where $d_{2b}=b^2 \bigm/\bigl(2\varepsilon\inf_{t\in V_{r}} F_{0,K+1}(t)\bigr)$. Taking account of $s_n-t_{nq}\geq Ma_n+q n^{-1/3}$ and $(Ma_n+qn^{-1/3})^3\geq (Ma_n)^3+q^3/n$ we obtain that 
$$
\exp\bigl(-nd_{2b}(s_n-t_{nq})^3\bigr)\leq \exp(-d_{2b} M^3\log n)\exp(-d_{2b}q^3).
$$
Hence, (\ref{equ:marti1}) holds with $d_1=\sum_{q=1}^{\infty} \exp(-d_{2b}q^3)$. 

The inequality (\ref{equ:marti2}) can be obtained analogously by using the similar grid $t_{nq}$ on the right of the point $t_0=s_n+Ma_n$ and applying Doob's submartingale inequality to the submartingale $\exp\bigl(-n\theta \int_{[s_n,t)} dM_{nk}\bigr)$ for $t\geq s_n$ with respect to the filtration ${\cal F}_t^*=\{(T_i,\Delta_i),i=1,\ldots,n:T_i\leq t\}$. The lemma is proved. 
\end{proof}

We continue with the proof of Lemma \ref{lem:lunirate}. 

\begin{proof}[Proof of Lemma \ref{lem:lunirate}] In order to prove the lemma we are actually mimics arguments used in \cite[proof of Theorem 4.10]{GMW08a}, so we just mention crucial points in our proof.  Let $a_n=n^{-1/3}\log^{1/3} n$. We focus on the case of $ F_{0+}(t_0)=0$, which is not covered in \cite{GMW08a}. 
In this case, 
$
F_{0+}(t+M a_n)\leq F_{0+}(t)+2M F'_{0+}(t_{0+}) a_n
$
and
$
F_{0+}(t-M a_n)\geq F_{0+}(t)-2M F'_{0+}(t_{0+}) a_n
$
for all $t\in (t_0,t_0+r)$ under sufficiently small $r$. Then it is sufficient to prove that for any $\epsilon>0$ there exist $n_0$ and $M>0$, such that 
\begin{equation}
\label{equ:scl1}
\Pr\bigl(\exists t\in (t_0,t_0+r):\widehat F_{n+}(t)\notin [F_{n+}(t-M a_n),F_{n+}(t+M a_n))\bigr)<\epsilon
\end{equation}
for all $n>n_0$. 

Note \ that \ the \ first \ jump \ point \ $\tau_{nk1}$ \ of \ $\widehat F_{nk}$ \ is \ the \ minimal \ $T_i$ \ with \ $\Delta_i^k=1$. Let
$m_{k}=\min\{i:\Delta_{(i)}^k=1\}$. By \citet{yng77},
\begin{eqnarray*}
&&\Pr(m_{k}>m) =\!\frac{n!}{(n\!-\! m)!}\\
&&\hspace{+4ex}\times\int_{\Rb^m}\Bigl\{\prod_{i=1}^{m} (1\!-\!F_{0k}(t_i))\Bigr\} (1\!-\!G(t_m))^{n-m}\indi_{\{t_1\leq\ldots\leq t_m\}}dG(t_1)\cdots dG(t_m).
\end{eqnarray*}
Assume for a moment that $F_{0k}\equiv G$ for all $k=1,\ldots,K$. Then
$$ 
\Pr(m_{k}> m) =  \frac{n!(n-m)!!}{(n-m)!(n+m)!!}\leq \Bigl(\frac{n}{n+m}\Bigr)^{[(m+1)/2]}. 
$$
Therefore, $\Pr(m_{k}>m)\to 0$ as $n\to\infty$, if $n^{1/2}/m\to 0$ as $n\to\infty$ and the density function $\frac{dF_{nk}}{dG}$ is bounded. Set $m=m(n)=n^{1/2}\log n$. Note that $\E(T^{u}_{(m)})=\frac{m}{n+1}=O(n^{-1/2}\log n)$ and $\Var(T^{u}_{(m)})=\frac{m(n-m+1)}{(n+1)^2(n+2)}=O(n^{-2}\log n)$ for the uniform order statistics $T^{u}_{(i)}=G(T_{(i)})$, $i=1,\ldots,n$. Then applying the Chebishev's inequality we obtain that for any fixed $c\!>\! 0$, $\Pr(T^{u}_{(m_k)}\!\!<\!c n^{-1/3})\!\to\! 1$ as $n\!\to\!\infty$. Taking account of $G'(t_{0+})\!>\!\varepsilon$ we conclude that $\Pr(T_{(m_k)}\!<\!t_0\!+\!n^{-1/3})\!=\!\Pr(\tau_{n,1}\!<\!t_{n,1})\!\to\! 1$ as $n\to\infty$.   

Now, applying Lemma \ref{lem:martingale} and \cite[Proposition 3.3]{GMW08a} we obtain that for any $\epsilon>0$ there exists $C>0$, such that $\Pr(B_{nr C})\geq 1-\epsilon/2$ for sufficiently large $n$ almost sure, where
$$
\begin{array}{l}
\displaystyle
B_{nrC}=\{\mbox{each of}\; F_{nk},k=1,\ldots,K,\; \mbox{has a jump in} (t_0+r,t_0+2r), 
\vspace{+0.3em}
\\
\displaystyle
\hspace{+2ex}
t_0\!+\! 2r\! <\! T_{(n)},
\max_{k=1,\ldots,K}\tau_{nk1}\!<\! t_{n,1},\!\!\!\sup_{0\leq t<s<t_0+2r} (\max_{k=1,\ldots,K}\! |{\cal R}_{nk}(t,s)|)\!\leq\!  C n^{-2/3}\}
\end{array}
$$
We split the interval $[t_0,t_0+r)$ to the subintervals $I_{n,j}=[t_{n,j},t_{n,j+1})$ for $j=0,\ldots,\lceil r n^{1/3}\rceil$, where $t_{n,j}=t_0+j n^{-1/3}$ and prove that 
\begin{equation}
\label{equ:prt}
P\bigl(\exists t\in I_{n,j}:\widehat F_{n+}(t)\notin [F_{0+}(t-M a_n),F_{0+}(t+M a_n)),B_{nrC}\bigr)<p_{j,M},
\end{equation}
where $p_{j,M}=d_1\exp(-d_2 M^3 \log j)$ for some $d_1,d_2>0$. Then 
$$
\Pr\bigl(\exists t\!\in\! (t_0,t_0+r)\!:\!\widehat F_{n+}(t)\!\notin\! [F_{0+}(t-M a_n),F_{0+}(t+M a_n)),B_{nrC}\bigr)\!\leq\! 
\sum\nolimits_{j=0}^{\infty} p_{j,M},
$$
and $\sum_{j=0}^{\infty} p_{j,M}<\epsilon/2$ under sufficiently large $M$. First, we consider 
\begin{equation*}
\Pr\bigl(\exists t\in I_{n,j}:\widehat F_{n+}(t)\geq F_{0+}(t+M a_n),B_{nr C}\bigr)\leq  \Pr(A_{njM}^+,B_{nrC}),
\end{equation*}
where $A_{njM}^+=\{\widehat F_{n+}(t_{n,j+1})\geq F_{0+}(s_{njM})\}$ and $s_{njM}=t_{n,j}+Ma_n$. Let $\tau_{nkj}^-$ be the last jump point of $F_{nk}$ before $t_{n,j+1}$, $k=1,\ldots,K$. On the event $B_{nrC}$ these jump points exists and $\tau_{nkj}^-\in [\tau_{nk1},t_{n,j+1})$. Hence, in notations of Lemma \ref{lem:martingale}, $\Pr(A_{njM}^+,B_{nrC})$ can be rewritten as follows:
\begin{equation}
\label{equ:pabpls}
\begin{array}{r}
\di 
\Pr\biggl(\bigcap_{k=1}^K\!\biggl\{\int_{[\tau_{nkj}^-,s_{njM})} \!\!\Bigl((\widehat F_{nk}(t)\!-\! F_{0k}(t))\!\! +\!\! \frac{F_{0k}(t)(\widehat F_{n+}\!(t)\!-\!F_{0+}(t))}{F_{0,K+1}(t)}\Bigr) dG(t)\vspace{+.5em}\\
\di
\leq \int_{[\tau_{nkj}^-,s_{njM})}  dM_{nk}(w)+{\cal R}_{nk}(\tau_{nk},s)\Bigr\},A_{njM}^+,B_{nrC}\biggr).
\end{array}
\end{equation}
Note that 
\begin{eqnarray*}
&&\int_{[t,s)} \frac{F_{0k}(u)(\widehat F_{n+}(u)\!-\!F_{0+}(u))}{F_{0,K+1}(u)}dG(u)\\
&&=
\frac{F_{0k}(s)}{F_{0,K+1}(s)}\int_{[t,s)} \bigl(\widehat F_{n+}(u)\!-\!F_{0+}(u)\bigr)dG(u)+\rho(t,s),
\end{eqnarray*}
where 
$
\rho(t,s)=\int_{[t,s)} \Bigl(\frac{F_{0k}(s)}{F_{0,K+1}(s)} -\frac{F_{0k}(u)}{F_{0,K+1}(u)}\Bigr) (\widehat F_{n+}(u)\!-\!F_{0+}(u))dG(u)
$.
Using Cauchy-Schwarz inequality and (\ref{equ:qrate}) we obtain that $|\rho(t,s)|$ is bounded above by
\begin{equation}
\label{equ:approrate}
\begin{array}{r}
\di
{\|\widehat F_{n+}-\widehat F_{0+}\|}_2\Bigl(\int_{[t,s)} \!\!\Bigl(\frac{F_{0k}(s)}{F_{0,K+1}(s)} -\frac{F_{0k}(u)}{F_{0,K+1}(u)}\Bigr)^2 dG(u)\Bigr)^{1/2} \\
\di
=O_P(n^{-1/3}(s-t)^{3/2}).
\end{array}
\end{equation}
Then (\ref{equ:pabpls}) can be rewritten as follows:
\begin{equation}
\label{equ:pab}
\begin{array}{l}
\di 
\Pr\biggl(\bigcap_{k=1}^K\biggl\{\int_{[\tau_{nkj}^-,s_{njM})} \!\!\Bigl((\widehat F_{nk}(t)\!-\! F_{0k}(t)) \\ 
\hspace{20.5ex}\di + \frac{F_{0k}(s_{njM})(\widehat F_{n+}(t)\!-\!F_{0+}(t))}{F_{0,K+1}(s_{njM})}\Bigr)dG(t)\\
\di\leq \int_{[\tau_{nkj}^-,s_{njM})}  dM_{nk}(w)+{\cal R}_{nk}^*(\tau_{nkj}^-,s_{njM})\Bigr\},A_{njM}^+,B_{nrC}\biggr),
\end{array}
\end{equation}
where $\sup\nolimits_{t,s\in V_{r}(t_0): t<s} (|{\cal R}_{nk}^*(t,s)|)= O_P(n^{-2/3}\vee n^{-1/3}(s-t)^{3/2})$. 
The event $A_{njM}^+$ implies 
$
\bigcup_{k=1}^n\{\widehat F_{nk}(t_{n,j+1})\geq F_{0k}(s_{njM})\}
$,
and for each $k\in\{1,\ldots,K\}$,
\begin{eqnarray*}
&&\{\widehat F_{nk}(t_{n,j+1})\geq F_{0k}(s_{njM})\}\\
&&=\Bigl\{\widehat F_{nk}(t_{n,j+1})\geq  F_{0k}(s_{njM}),\int_{\tau_{nkj}^-}^{s_{njM}}(\widehat F_{n+}(u)-F_{0+}(u))dG(u)\geq 0\Bigr\}\\
&& \hspace{+3.7ex}{\textstyle\bigcup}\;
\Bigl\{\widehat F_{nk}(t_{n,j+1})\geq F_{0k}(s_{njM}),\int_{\tau_{nkj}^-}^{s_{njM}}(\widehat F_{n+}(u)-F_{0+}(u))dG(u)<0\Bigr\}
\end{eqnarray*}
On \ the \ event \ $\{\widehat F_{nk}(t_{n,j+1})\geq F_{0k}(s_{njM})\}$ \ applying \ $\widehat F_{nk}(t)-F_{0k}(t)\geq F_{0k}(s_{njM})- F_{0k}(t)$ and bounded away from zero property for the derivatives $F_{0k}'(t)\vee G'(t) \geq 1/\varepsilon$ \ for \  all $t\in [\tau_{nkj}^-,s_{njM})$, \ we \ obtain \ that $\int_{[\tau_{nkj}^-,s_{njM})}(\widehat F_{nk}(t)-F_{0k}(t)) dG(t)\geq b(s_{njM}-\tau_{njk}^-)^2$ for some $b>0$ and all $k=1,\ldots,K$. Hence, the probability in (\ref{equ:pab}) is bounded above by 
\begin{equation}
\label{equ:mrtf1}
\begin{array}{rcl}
\di
\sum\nolimits_{k=1}^K \Pr\biggl(b(s_{njM}-\di\tau_{nkj}^-)^2&\leq&\di\int_{[\tau_{nkj}^-,s_{njM})}  dM_{nk}(w)\\
 &+&\di {\cal R}_{nk}^*(\tau_{nkj}^-,s_{njM}),A_{njM}^+,B_{nrC}\biggr)
\end{array}
\vspace{-1.5em}
\end{equation}
\begin{equation}
\label{equ:mrtf2}
+\; \Pr\Bigl(\bigcup_{k=1}^K\int_{\tau_{nkj}^-}^{s_{njM}}(\widehat F_{n+}(u)-F_{0+}(u))dG(u)<0,A_{njM}^+,B_{nrC}\Bigr).
\end{equation}
On the event in (\ref{equ:mrtf2}) there exists an $l\in 1,\ldots,K$ such that $\widehat F_{nl}(t_{n,j+1})\!\leq\! F_{0l}(s_{njM})$ and $\widehat F_{nk}(t_{n,j+1})\!>\! F_{0k}(s_{njM})$ for all $k$: $\tau_{nkj}^{-}>\tau_{nlj}^{-}$. Then the probability in (\ref{equ:mrtf2}) is bounded above by 
\begin{equation}
\label{equ:mrtf3}
\Pr\Bigl(\int_{\tau_{nlj}^-}^{s_{njM}}(\widehat F_{n+}(u)-F_{0+}(u))dG(u)<0,A_{njM}^+,B_{nrC}\Bigr)
\end{equation}
Applying Lemma \ref{lem:martbase} for each $k=1,\ldots,K$ we obtain from (\ref{equ:marti1}) that there exists positive constants $d_{11}$ and $d_{12}$, such that the sum in (\ref{equ:mrtf1}) is bounded above by $d_{11}\exp(-d_{12}M^3\log n)$. Slightly remaking \citet[proof of Lemma 4.14]{GMW08a} we get the upper bound $d_{21}\exp(-d_{22}M^3\log n)$ for the probability in (\ref{equ:mrtf3}) with some positives $d_{21}$ and $d_{22}$ (full proof of the bound is given in Appendix, Lemma A.\ref{lem:neg}). Hence, there exists $d_{1}^+$ and $d_2^+$ such that $\Pr(A_{njM}^+,B_{nrC})\leq d_{1}^+\exp(-d_2^+ M^3\log n)$. 

The \ required \ bound \ $\Pr(A_{njM}^-,B_{nrC})\leq d_{1}^-\exp(-d_2^- M^3\log n)$, \ where $A_{njM}^-=\{\widehat F_{n+}(t_{n,j+1})\leq F_{0+}(s_{njM}^*)\}$ and $s_{njM}^*=t_{n,j+1}-Ma_n$ can be obtained similarly. Then we get (\ref{equ:prt}), which implies together with $\Pr(B_{nrC})>1-\epsilon/2$ the inequality (\ref{equ:scl1}). 

In order to prove the lemma for $t_0$: $F_{0+}(t_0)>0$ we can split the problem and get the required uniform convergence rate separately over the right hand side $[t_0,t_0+r)$ and over the left hand side $(t_0-r,t_0)$ of the neighborhood $V_r(t_0)$ for some $r>0$. Moreover, the event $B_{nrC}$ should be changed to its two-sided form
\begin{eqnarray*}
B_{nrC}\!=\!\bigl\{\mbox{each of}\; F_{nk},k\!=\! 1,\ldots,K,\; \mbox{has at least one jump in}\; (t_0\!+\! r,t_0 \! + \! 2r)
\rule{+.5ex}{0em}&&\\
\mbox{and at least one jump in} \;  (t_0-2r,t_0-r),t_0+2r <T_{(n)},\rule{+.5ex}{0em}&& \\
\sup_{t_0-2r\leq t<s<t_0+2r} (\max_{k=1,\ldots,K}|{\cal R}_{nk}(t,s)|)\leq C n^{-2/3}\bigr\}.&&
\end{eqnarray*}
The rest of the proof for the right hand side is analogous to the case of $F_{0+}(t_0)=0$, and the proof for the left hand side   is symmetric to the right hand side one. In the last case the key to the proof is the inequality (\ref{equ:marti2}).

Alternatively, one can follow \citet[proof of Theorem 4.10 with $v_n(t)=n^{-1/3}\log^{1/3} n$]{GMW08a}. The lemma is proved.
\end{proof}

\begin{lemma}
\label{lem:crlunirate}
Under the conditions of Lemma \ref{lem:lunirate} there exists a constant $r>0$, such that for any $k\in\{1,\ldots,K\}$
\begin{equation*}
\sup\nolimits_{t\in V_{r}(t_0)} |\widehat F_{nk}(t)-F_{0k}(t)|=O_P(n^{-1/3}\log^{1/3} n).
\end{equation*}
\end{lemma}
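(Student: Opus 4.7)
The plan is to mimic the grid-plus-martingale argument used in the proof of Lemma \ref{lem:lunirate}, applied separately to each component $\widehat F_{nk}$, exploiting the fact that the uniform bound for the sum $\widehat F_{n+}$ has already been established. Fix $k\in\{1,\ldots,K\}$ and $\epsilon>0$, and set $a_n=n^{-1/3}\log^{1/3} n$. First, I would invoke Lemma \ref{lem:lunirate} to pick $r>0$ and $C_0>0$ such that the event $E_n=\{\|\widehat F_{n+}-F_{0+}\|_{V_r(t_0)}\leq C_0 a_n\}$ satisfies $\Pr(E_n)\geq 1-\epsilon/3$ for all large $n$. On $E_n$ the cross term
$$
\int_{[\tau,s)}\frac{F_{0k}(t)(\widehat F_{n+}(t)-F_{0+}(t))}{F_{0,K+1}(t)}\,dG(t)
$$
appearing in (\ref{equ:mart-a})--(\ref{equ:mart-b}) is bounded in absolute value by $C_1 a_n(s-\tau)$ for all $\tau<s$ in $V_r(t_0)$, with $C_1$ depending on $C_0$, $\varepsilon$ and $\inf_{V_r(t_0)} F_{0,K+1}$.

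Next, I would reproduce the partition $I_{n,j}=[t_{n,j},t_{n,j+1})$ with $t_{n,j}=t_0+j n^{-1/3}$ (extended symmetrically to the left of $t_0$ when $F_{0+}(t_0)>0$), and for a constant $M$ to be chosen, control the events
$$
A_{njM}^{k+}=\{\widehat F_{nk}(t_{n,j+1})\geq F_{0k}(t_{n,j}+Ma_n)\},\quad A_{njM}^{k-}=\{\widehat F_{nk}(t_{n,j+1})\leq F_{0k}(t_{n,j+1}-Ma_n)\}.
$$
Let $\tau_{nkj}^-$ denote the last jump point of $\widehat F_{nk}$ strictly below $t_{n,j+1}$, which exists on the event $B_{nrC}$ of Lemma \ref{lem:lunirate}, and write $s_{njM}=t_{n,j}+Ma_n$. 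On $A_{njM}^{k+}\cap B_{nrC}$, monotonicity of $\widehat F_{nk}$ together with the lower bound on $F_{0k}'$ and $G'$ yields $\int_{[\tau_{nkj}^-,s_{njM})}(\widehat F_{nk}(t)-F_{0k}(t))\,dG(t)\geq b(s_{njM}-\tau_{nkj}^-)^2$ for some $b>0$. Inserting this into (\ref{equ:mart-a}), using the linear-in-$(s-\tau)$ bound on the cross term on $E_n$, and choosing $M\geq 4C_1/b$, whenever $s_{njM}-\tau_{nkj}^-\geq Ma_n$ the linear penalty is at most half of the quadratic main term, so
$$
\tfrac{1}{2}b(s_{njM}-\tau_{nkj}^-)^2\leq \int_{[\tau_{nkj}^-,s_{njM})}dM_{nk}+{\cal R}_{nk}(\tau_{nkj}^-,s_{njM}).
$$

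Since $\sup_{t,s\in V_r(t_0):\,t<s}|{\cal R}_{nk}(t,s)|=O_P(n^{-2/3})$ by Lemma \ref{lem:martingale}, I would then invoke inequality (\ref{equ:marti1}) of Lemma \ref{lem:martbase} to obtain $\Pr(A_{njM}^{k+}\cap E_n\cap B_{nrC})\leq d_1\exp(-d_2 M^3\log n)$ uniformly in $j$; summing the geometric tail over the $O(n^{1/3})$ grid intervals and choosing $M$ large makes the total contribution less than $\epsilon/3$. The bound for $A_{njM}^{k-}$ is obtained identically from (\ref{equ:mart-b}) and (\ref{equ:marti2}). Combined with $\Pr(E_n^c)<\epsilon/3$ and $\Pr(B_{nrC}^c)<\epsilon/3$, this delivers the claimed rate.

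The main obstacle is the cross term $\frac{F_{0k}}{F_{0,K+1}}(\widehat F_{n+}-F_{0+})\,dG$ in the characterizations (\ref{equ:mart-a})--(\ref{equ:mart-b}), which has no counterpart in the scalar problem treated in Lemma \ref{lem:lunirate}. Because $\widehat F_{n+}-F_{0+}$ is already uniformly of size $a_n$ on $V_r(t_0)$, this term is \emph{linear} in $(s-\tau)$, whereas the lower bound produced by the $A_{njM}^{k\pm}$ event is \emph{quadratic} in $(s-\tau)$. Absorbing the linear penalty into the quadratic main term is precisely what forces the grid scale $Ma_n$ with $M$ chosen large enough in terms of $C_1/b$, and it explains why conditioning on the good event $E_n$ from Lemma \ref{lem:lunirate} is essential. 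Once this absorption is carried out, the remainder is a direct transcription of the argument in Lemma \ref{lem:lunirate} with $\widehat F_{n+}$ and $F_{0+}$ replaced by $\widehat F_{nk}$ and $F_{0k}$.
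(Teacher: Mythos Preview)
Your proposal is correct and follows essentially the same route as the paper's proof: condition on the high-probability event that $\widehat F_{n+}-F_{0+}$ is already uniformly $O_P(a_n)$ (the paper's $D_{nC^*}$, your $E_n$), so that the cross term in (\ref{equ:mart-a})--(\ref{equ:mart-b}) becomes a linear-in-length penalty absorbed by the quadratic lower bound on $A_{njM}^{k\pm}$ for large $M$, then invoke Lemma~\ref{lem:martbase} and sum over the grid. The only cosmetic difference is that the paper first upgrades the local rate for $\widehat F_{n+}$ to a uniform one on $[0,\gamma]$ via compactness before defining its good event, whereas you work directly with the local bound on $V_r(t_0)$; both are adequate for the local conclusion of the lemma.
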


\begin{proof}
By Lemma \ref{lem:lunirate}, for any point $t_0\in [0,G(\gamma)]$ there exists a constant $r>0$, such that 
$
\sup\nolimits_{t\in V_{r}(t_0)} |\widehat F_{n+}(t)-F_{0+}(t)|=O_P(n^{-1/3}\log^{1/3} n).
$ 
Taking account of $[0,G(\gamma)]$ is a compact set we obtain that 
$$
\sup_{t\in [0,\gamma]}|\widehat F_{n+}(t)\!-\!F_{0+}(t)|=\!\!\sup_{t\in [0,G(\gamma)]} |\widehat F_{n+}(t)\!-\! F_{0+}(t)|\!=\!O_P(n^{-1/3}\log^{1/3} n). 
$$
Then for any $\epsilon>0$ there exist an $n_0$ and $C^*>0$, such that $\Pr(D_{nC^*})>1-\epsilon/4$ for all $n\geq n_0$, where $D_{nC^*}=\bigl\{\|\widehat F_{n+}-F_{0+}\|_{[0,\gamma)}\leq C^* n^{-1/3}\log^{1/3} n\bigr\}$. Moreover, for any $\epsilon>0$ and $C>0$ there exists an $r>0$, such that $\Pr(B_{nrC})>1-\epsilon/4$ under sufficiently large $n$. Therefore, in order to prove the lemma it is sufficient to show that under $\Pr(D_{nC^*})\wedge\Pr(B_{nrC})>1-\epsilon/4$ for each $k\in\{1,\ldots,K\}$,
$$
\Pr\bigl(\exists t\!\in\! V_r(t_0)\!:\!\widehat F_{nk}\!\notin\! [F_{0k}(t-\! M a_n),F_{0k}(t+\! M a_n)),B_{nrC},D_{nC^*}\bigr)\!\leq\! 
\sum_{j=0}^{\infty} p_{j,M},
$$
and $\sum\nolimits_{j=0}^{\infty} p_{j,M}\to 0$ as $M\to\infty$, where $V_r(0)=(0,r)$, and $V_r(t)=(t-r,t+r)$ if $t>0$.   

Let $A_{njkM}^+=\{\widehat F_{n+}(t_{n,j+1})\geq F_{0+}(s_{njM})\}$, where $s_{njM}=t_{n,j}+Ma_n$ and $t_{n,j}=t_0+j n^{-1/3}$ as in Lemma \ref{lem:lunirate}. Analogously as in (\ref{equ:pab}) we can write $\Pr(A_{njkM}^+,B_{nrC},D_{nC^*})$ as follows: 
\begin{equation}
\label{equ:pab2}
\begin{array}{l}
\di 
\Pr\biggl(\biggl\{\int_{[\tau_{nkj}^-,s_{njM})}\! \Bigl((\widehat F_{nk}(t)-F_{0k}(t)) \\
\hspace{+24.3ex}
\di + \:\frac{F_{0k}(s_{njM})(\widehat F_{n+}(t)-F_{0+}(t))}{F_{0,K+1}(s_{njM})}\Bigr)dG(t)\\
\hspace{+3ex}
\di\leq \int_{[\tau_{nkj}^-,s_{njM})} \!\!\!\! dM_{nk}(w)\!+\! {\cal R}_{nk}^*(\tau_{nkj}^-,s_{njM})\Bigr\},A_{njkM}^+,B_{nrC},D_{nC^*}\biggr),
\end{array}
\end{equation}
where $\sup\nolimits_{t,s\in V_{r}(t_0): t<s} (|{\cal R}_{nk}^*(t,s)|)= O_P(n^{-2/3}\vee n^{-1/3}(s-t)^{3/2})$ and $\tau_{nkj}^-$ is the last jump point of $\widehat F_{nk}$ before $t_{n,j+1}$. 

On the event $A_{njkM}^+$ using similar arguments as in Lemma \ref{lem:lunirate} we obtain that $\int_{[\tau_{nkj}^-,s_{njM})}(\widehat F_{nk}(t)-F_{0k}(t)) dG(t)\geq b^*(s_{njM}-\tau_{njk}^-)^2$ for some $b^*>0$. On the other hand, 
$$
\int_{[\tau_{nkj}^-,s_{njM})}\!\!\!\!\!\! \frac{F_{0k}(s_{njM})(\widehat F_{n+}(t)\!-\! F_{0+}(t))}{F_{0,K+1}(s_{njM})}dG(t)\!\leq\! c\, \|\widehat F_{n+}\!\!-\! F_{0+}\|_{[0,\gamma)} (s_{njM}\!-\!\tau_{nkj}^-)
$$
under some $c>0$. Taking account of $(s_{njM}-\tau_{njk}^-)\geq (s_{njM}-t_{n,j+1})\geq (M-1)a_n$ we conclude that on  the event $D_{nC^*}$ for any fixed $C^*>0$ and sufficiently large $M$ the left hand side of the inequality under the probability sign in (\ref{equ:pab2}) is bounded below by $b(s_{njM}-\tau_{njk}^-)^2$ with some $b\in (0,b^*)$. Hence, 
$\Pr(A_{njkM}^+,B_{nrC},D_{nC^*})\leq  p_{jbM}=d_1\exp(-d_{2b} M^3\log n)$ for some $d_1,d_{2b}>0$ follows immediately from  Lemma \ref{lem:martbase}. Consequently, 
$$
\Pr\bigl(\exists t\!\in\! V_r(t_0)\!:\!\widehat F_{nk}> F_{0k}(t+M a_n),B_{nrC},D_{nC^*}\bigr)\!\leq\! 
\sum\nolimits_{j=0}^{\infty} p_{jbM}\to 0
$$
as $M\to\infty$. Under the case of $t_0>0$ the upper bound  
$$
\Pr\bigl(\exists t\!\in\! V_r(t_0)\!:\!\widehat F_{nk} < F_{0k}(t-M a_n),B_{nrC},D_{nC^*}\bigr)\!\leq\! 
\sum\nolimits_{j=0}^{\infty} p_{jM}\to 0,
$$
as $M\to\infty$ can be obtained analogously. The lemma is proved. 
\end{proof}

We continue with the proof of Theorem \ref{teo:unirate}. 

\begin{proof}[Proof of Theorem \ref{teo:unirate}] First we use the Smirnov's transformation to the observation time $T$, and consider the sample $(X_i^{(u)},Y_i^{(u)},T_i^{(u)})$, where $X_i^{(u)}=G(X_{i})$, $Y_i^{(u)}=G(Y_{i})$ and $T_i^{(u)}=G(T_i)$, $i=1,\ldots,n$. The log likelihood function (\ref{equ:llik0}) of the new sample $(T^{(u)}\!\!,\Delta^{(u)})$ of current status data with competing risks is 
$$
LL^{(u)}(F_{n}^{(u)})=LL\bigl(F_{n}^{(u)} | T^{(u)}\!\!,X^{(u)},Y^{(u)})=LL(F_{n} | T,X,Y),
$$
where $F_{n}^{(u)}=(F_{n1}^{(u)},\ldots,F_{nK}^{(u)})$, $F_{nk}^{(u)}\equiv F_{nk}\circ G^-$, $k=1,\ldots,K$, and $G^-$ is the generalized inverse function for $G$. 
Moreover, $F_{nk}^{(u)}$  are continuously differentiable on the interval $(0,G(\gamma)]$ with bounded and bounded away from zero derivatives. 

Then by Lemma \ref{lem:crlunirate}, for any point $t_0\in [0,G(\gamma)]$ there exists a constant $r>0$, such that 
$$
\sup\nolimits_{t\in V_{r}(t_0)} |\widehat F_{nk}^{(u)}(t)-F_{0k}^{(u)}(t)|=O_P(n^{-1/3}\log^{1/3} n),
$$ 
$k=1,\ldots,K$. Taking account of $[0,G(\gamma)]$ is a compact set we obtain that 
$$
\sup_{t\in [0,\gamma]}|\widehat F_{nk}^{(u)}(t)-F_{0k}^{(u)}(t)|=\!\!\!\sup_{t\in [0,G(\gamma)]} |\widehat F_{nk}^{(u)}(t)-F_{0k}^{(u)}(t)|=O_P(n^{-1/3}\log^{1/3} n). 
$$
The theorem is proved. 
\end{proof}

\begin{proof}[Proof of Corollary \ref{cor:unirate}]
We lose no generality in assuming that $G$ is the standard uniform distribution $U(0,1)$. Otherwise, we apply Smirnov's transformation as in the proof of Theorem \ref{teo:unirate}. In order to prove the corollary it will be sufficient to derive that (\ref{equ:unilocrate}) holds under $V_{r}=(1-r,1)$ with some $r>0$. 

Under the right-censored data ($F_{n}\equiv F_{n+}$) both the inequalities (\ref{equ:char1}) and (\ref{equ:char2}) with the equality holds if $\tau_{n,j}$ is a jump point of $\widehat F_{n}$ remain correct for all $s>0$. Note that the last jump point of the MLE is not uniquely defined if $\Delta_{(n)}=0$. Let $\tau_{n,\max}=\max\{T_i:\Delta_i=0\}$, for which $\widehat F_n(\tau_{n,\max})<1$. 
Applying \citet{yng77} we obtain that 
$$
\begin{array}{rcl}
\di
\Pr(\bar m\!<\! n\!-\! m)&\!=\!&\di\frac{n!}{(n-m)!}\\
&\!\times\! &\di \int_{\Rb^m}\Bigl\{\prod_{i=1}^{m} F_0(t_i)\Bigr\} G(t_m)^{n-m}\indi_{\{t_m\leq\ldots\leq t_1\}}dG(t_1)\cdots dG(t_m),

\end{array}
$$
where \ \ $\bar m=\max\{i:\Delta_{(i)}=0\}$. \ \
Hence, \ \ $\Pr(\Delta_{(n)}=0)\to 0$ \ \ and \ \ $\Pr(T_{(m_k)}>1-n^{-1/3})\to 1$ as $n\to\infty$.   

Similarly as in Lemma \ref{lem:martingale} we obtain that for any jump point $\tau_{n,i}$ of $\widehat F_n$ and all $s>0$,
$$
\int_{[\tau_{n,i},s)} \frac{\widehat F_n(t)-F_0(w)}{F_0(t)}dG(t)\leq \int_{[\tau_{n,i},s)}  dM_n(t)+{\cal R}_n(\tau_{n,i},s)
$$
with 
$
\sup_{1-2r\leq t<s\leq 1} (|{\cal R}_n(t,s)|)= O_P(n^{-2/3})
$
for some $r>0$, where 
$$
M_n(t)\!\!=\!\!\int_{u\leq t}\!\!\! \frac{(1\!-\! F_0(u))(\delta\!-\! F_0(u))}{F_0(u)}dP_n(u,\delta)\!-\!
\!\int_{u\leq t} \!\!\!(\bar\delta\!-\!(1\!-\! F_0(u)))dP_n(u,\delta).
$$
Now using arguments similar to the proof of Lemma \ref{lem:lunirate} (left hand side case) we conclude that (\ref{equ:unilocrate}) holds under $V_{r}=(1-r,1)$ with some $r>0$. Applying  Lemma \ref{lem:lunirate} for other points in $[0,1)$ we obtain the rate of uniform convergence $O_P(n^{-1/3}\log^{1/3} n)$ on the interval $[0,1]$. The corollary is proved. 
\end{proof} 

\begin{proof}[Proof of Corollary \ref{teo:funirate}] \ \  By the reconstruction formula (\ref{equ:reconstr}) and the Duhamel equation (see e.g. \citet{abgk93}),
\begin{equation*}
\widehat S_n(x)-S(x)=S(x) \int_0^x \frac{\widehat S_n(u_-)}{S(u)} \Bigl(\frac{d\widehat F_{n1}(u)}{1-\widehat F_{n+}(u_-)}-\frac{dF_{01}(u)}{1-F_{0+}(u)}\Bigr). 
\end{equation*}
Hence,
\begin{eqnarray*}
|\widehat S_n(x)\!-\! S(x)|\! &\leq \! & \Bigl|\int_0^x \frac{\widehat S_n(u_-)}{1-\widehat F_{n+}(u_-)} d(\widehat F_{n1}(u)-F_{01}(u)) \Bigr| \\
&+\! &\int_0^x \frac{\widehat S_n(u_-) |\widehat F_{n+}(u_-)\!-\! F_{0+}(u)| dF_{01}(u)}{(1-\widehat F_{n+}(u_-))(1-\widehat F_{n+}(u))} \!=\! I_{1n}(x)\! +\! I_{2n}(x).
\end{eqnarray*}
Note that $\widehat S_n(u_-)/(1-\widehat F_{n+}(u_-))=1/\widehat Q_n(u_-)$ is a non decreasing function,  and $d\widehat F_{n+}=-\widehat Q_{n-}d\widehat S_n-\widehat S_{n}d\widehat Q_n$. Using the integration by parts formula we have
\begin{eqnarray*}
I_{1n}(x)&\leq & \frac{|\widehat F_{n1}(x)-F_{01}(x)|}{\widehat Q_n(\gamma)} -\int_{0}^{\gamma} \frac{|\widehat F_{n1}(u)-F_{01}(u)| }{\widehat Q_n(u)\widehat Q_n(u_-)}d\widehat Q_n(u) \\
&\leq & M_n |\widehat F_{n1}(x)-F_{01}(x)| + M_n^3 \int_0^{\gamma} |\widehat F_{n1}-F_{01}| d\widehat F_{n+},
\end{eqnarray*}
and 
$$
I_{2n}(x)\leq M_n^2 \int_{0}^{\gamma} |\widehat F_{n+}(u_-)-F_{0+}(u)| dF_{01}(u)=M_n^2 \int_{0}^{\gamma} |\widehat F_{n+}-F_{0+}| dF_{01} 
$$
for a positive constant $M_n\geq (1-\widehat F_{n+}(\gamma))^{-1}$. 
Applying Theorem \ref{teo:unirate} together with consistency of the estimator $\widehat F_{n+}$ and $F_{0+}(\gamma)<1$ we obtain  (\ref{equ:rate}). The corollary is proved. 
\end{proof}

\appendix

\section{A technical proof}
\label{app}

\begin{lemmaa}
\label{lem:neg}
Under the conditions of Lemma \ref{lem:lunirate} there exists $d_1,d_2>0$, such that 
\begin{equation}
\label{equ:sbound}
\Pr\Bigl(\int_{\tau_{nlj}^-}^{s_{njM}}(\widehat F_{n+}(u)-F_{0+}(u))dG(u)<0,A_{njM}^+,B_{nrC}\Bigr)\leq p_{jM},
\end{equation}
where \ $p_{jM}\!=\!d_1\exp(-d_2 M^3\log^{1/3} n)$ \ and \ $l\in 1,\ldots,K$ \ is \ such \ that $\widehat F_{nl}(t_{n,j+1})\!\leq\! F_{0l}(s_{njM})$ and $\widehat F_{nk}(t_{n,j+1})\!>\! F_{0k}(s_{njM})$ for all $k$: $\tau_{nkj}^{-}>\tau_{nlj}^{-}$.
\end{lemmaa}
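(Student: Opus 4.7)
The plan is to reduce the event in (\ref{equ:sbound}) to a rare deviation of the martingale $M_{nl}$ and then invoke Lemma~\ref{lem:martbase}, in the same spirit in which the main term (\ref{equ:mrtf1}) was controlled. The two main ingredients are the MLE characterization (\ref{equ:mart-a}) from Lemma~\ref{lem:martingale} applied to the ``lagging'' component $\widehat F_{nl}$, and the monotonicity of $\widehat F_{n+}$ together with the lower bound $F_{0+}'\geq\varepsilon$ on $V_r(t_0)$.

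First I would exploit monotonicity. On $A_{njM}^+$ the bound $\widehat F_{n+}(t_{n,j+1})\geq F_{0+}(s_{njM})$ together with the monotonicity of $\widehat F_{n+}$ and $F_{0+}$ gives $\widehat F_{n+}(u)-F_{0+}(u)\geq F_{0+}(s_{njM})-F_{0+}(u)\geq 0$ for $u\in[t_{n,j+1},s_{njM}]$; the lower bound $F_{0+}'\geq\varepsilon$ then produces a quadratic lower bound
\[
\int_{t_{n,j+1}}^{s_{njM}}\frac{F_{0l}(u)(\widehat F_{n+}(u)-F_{0+}(u))}{F_{0,K+1}(u)}\,dG(u)\;\geq\;c_1(Ma_n)^2
\]
for some $c_1>0$ depending on $\varepsilon$ and on the positive lower bounds of $F_{0l}$, $G'$, and $F_{0,K+1}^{-1}$ on $V_r(t_0)$. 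On the other hand, the hypothesized strict negativity $\int_{\tau_{nlj}^-}^{s_{njM}}(\widehat F_{n+}-F_{0+})\,dG<0$ combined with this forces $\widehat F_{n+}$ to lie sufficiently below $F_{0+}$ on $[\tau_{nlj}^-,t_{n,j+1}]$ to outweigh the positive mass on $[t_{n,j+1},s_{njM}]$, which in particular forces $s_{njM}-\tau_{nlj}^-\geq Ma_n$.

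Second I would apply inequality (\ref{equ:mart-a}) from Lemma~\ref{lem:martingale} at the jump $\tau_{nlj}^-$ of $\widehat F_{nl}$ with $s=s_{njM}$. On $[\tau_{nlj}^-,s_{njM})$, the contribution $\int(\widehat F_{nl}-F_{0l})\,dG$ can be bounded from below by $-O\bigl(Ma_n(s_{njM}-\tau_{nlj}^-)\bigr)$, using $\widehat F_{nl}(t_{n,j+1})\leq F_{0l}(s_{njM})$, the $L^2$-rate (\ref{equ:qrate}), and the Lipschitz regularity of $F_{0l}$. This is negligible relative to the $c_1(s_{njM}-\tau_{nlj}^-)^2$ term once $M$ is large. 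After absorbing $\mathcal R_{nl}^*=O_P(n^{-2/3})$ into the leading quadratic term, the event in (\ref{equ:sbound}) is contained in
\[
\Bigl\{\textstyle\int_{[\tau_{nlj}^-,s_{njM})}dM_{nl}\;\geq\;b\,(s_{njM}-\tau_{nlj}^-)^2,\;\; s_{njM}-\tau_{nlj}^-\geq Ma_n\Bigr\}
\]
for some $b>0$, which is exactly the type of event controlled by (\ref{equ:marti1}) in Lemma~\ref{lem:martbase} (with $s_n=s_{njM}$, $w=\tau_{nlj}^-$). This yields the required bound $d_1\exp(-d_2M^3\log n)$.

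The main obstacle I expect is the sign-tracking of $\int_{\tau_{nlj}^-}^{s_{njM}}(\widehat F_{nl}-F_{0l})\,dG$: since only the single value $\widehat F_{nl}(t_{n,j+1})$ is directly constrained and $\widehat F_{nl}$ may in principle jump above or below $F_{0l}$ elsewhere on the interval, no pointwise comparison is available. One therefore has to exploit the specific minimality of $\tau_{nlj}^-$ in the definition of $l$ (no component with a more recent last jump violates the upper bound $F_{0k}(s_{njM})$) together with the global $L^2$-rate (\ref{equ:qrate}) to push the short-interval integral estimate through uniformly in~$j$.
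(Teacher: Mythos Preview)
Your plan goes in the wrong direction. The inequality (\ref{equ:mart-a}) applied to the \emph{lagging} component $l$ cannot yield a useful lower bound on its left side. The only pointwise information on $\widehat F_{nl}$ is the \emph{upper} bound $\widehat F_{nl}(t_{n,j+1})\le F_{0l}(s_{njM})$; nothing prevents $\widehat F_{nl}(\tau_{nlj}^-)$ from being arbitrarily small, so $\int_{\tau_{nlj}^-}^{s_{njM}}(\widehat F_{nl}-F_{0l})\,dG$ can be very negative. The $L^2$-rate (\ref{equ:qrate}) does not rescue this: Cauchy--Schwarz gives only $O_P\bigl(n^{-1/3}(s_{njM}-\tau_{nlj}^-)^{1/2}\bigr)$, and at the relevant scale $s_{njM}-\tau_{nlj}^-\asymp Ma_n$ this is of order $M^{1/2}n^{-1/2}\log^{1/6}n$, which \emph{dominates} $(Ma_n)^2=M^2n^{-2/3}\log^{2/3}n$ rather than being negligible against it. So your claimed bound $-O\bigl(Ma_n(s_{njM}-\tau_{nlj}^-)\bigr)$ is not obtainable from the stated ingredients. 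A second problem: your quadratic lower bound $c_1(Ma_n)^2$ for $\int_{t_{n,j+1}}^{s_{njM}}\frac{F_{0l}}{F_{0,K+1}}(\widehat F_{n+}-F_{0+})\,dG$ requires $F_{0l}$ bounded away from zero on $V_r(t_0)$, but in the crucial boundary case $F_{0+}(t_0)=0$ treated in Lemma~\ref{lem:lunirate} one has $F_{0l}(t_0)=0$, so no such $c_1$ exists uniformly in $j$.

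The paper's argument works in the opposite direction. It introduces an auxiliary index $l^*\ge l$ so that $\int_{\tau_{nl^*j}^-}^{\tau_{nkj}^-}(\widehat F_{n+}-F_{0+})\,dG\le 0$ for every $k>l^*$, and then applies the \emph{reverse} characterization inequality (\ref{equ:mart-b}) to the \emph{leading} components $k>l^*$, for which $\widehat F_{nk}(t_{n,j+1})>F_{0k}(s_{njM})$ by the definition of $l$. For those components the difference $\widehat F_{nk}-F_{0k}$ is genuinely positive on the relevant pieces, which produces the quadratic term $b(s_{njM}-\tau_{nl^*j}^-)^2$ directly; the weighted $\widehat F_{n+}-F_{0+}$ term carries the nonnegative weight $F_{0k}/F_{0,K+1}$ and, by the choice of $l^*$, has the right sign to be \emph{dropped}, so no positive lower bound on $F_{0k}$ is needed. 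The remaining probability is then bounded by several applications of Lemma~\ref{lem:martbase} to the martingales $M_{nk}$, $k>l^*$, summed over the finitely many choices of $l$, $l^*$ and the ordering of $(\tau_{nkj}^-)_k$. In short: use (\ref{equ:mart-b}) on the components that are \emph{ahead}, not (\ref{equ:mart-a}) on the one that is behind.
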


\begin{proof}
We lose no generality by the assumption $\tau_{n1j}^-\leq\ldots\leq\tau_{nKj}^-$. On the event $A_{njM}^+$ let $l^*=l$ if $\int_{\tau_{nlj}^-}^{\tau_{nkj}^-} (\widehat F_{n+}-F_{0+})dG\leq 0$ for all $k>l$, and $l^*=\max\bigl\{k\in l+1,\ldots,K :\int_{\tau_{nlj}^-}^{\tau_{nkj}^-}(\widehat F_{n+}-F_{0+})dG>0\bigr\}$. For any fixed $l\leq l^*$ using notation $\tau_{n,K+1,j}^-=s_{njM}$ we can write that 
\begin{eqnarray*}
\int_{\tau_{nl^*j}^-}^{s_{njM}} (\widehat F_{n+}-F_{0+})dG&=&\sum\nolimits_{k=l^*+1}^K \int_{\tau_{nl^*j}^-}^{\tau_{nkj}^-} (\widehat F_{nk}-F_{0k})dG\\
&+&
 \sum\nolimits_{k=l^*}^K \sum\nolimits_{p=1}^k \int_{\tau_{nkj}^-}^{\tau_{n,k+1,j}^-} (\widehat F_{np}-F_{0p})dG
\end{eqnarray*}
Using (\ref{equ:mart-b})  and (\ref{equ:approrate}) we obtain that for each $k=1,\ldots,K$ and $t>T_{(1)}$,
\begin{eqnarray*}
\int_{t}^{\tau_{nkj}^-}(\widehat F_{nk}-F_{0k})dG &+&\frac{F_{0k}(s_{njM})}{F_{0,K+1}(s_{njM})}\int_{t}^{\tau_{nkj}^-}(\widehat F_{n+}-F_{0+})dG \\
&\geq & \int_{t}^{\tau_{nkj}^-} dM_{nk}- C(n^{-2/3}\vee n^{-1/3}(\tau_{nkj}^- -t)^{3/2}).
\end{eqnarray*}
Using notations of $l$ and $l^*$, and $\int_{\tau_{nl^*j}^-}^{\tau_{nkj}^-}=\int_{\tau_{nlj}^-}^{\tau_{nkj}^-}-\int_{\tau_{nlj}^-}^{\tau_{nl^*j}^-}$ we conclude that 
$
\int_{\tau_{nl^*j}^-}^{\tau_{nkj}^-}(\widehat{F}_{n+}-F_{0+})dG\leq 0
$
for all $k=l^*+1,\ldots,K$. Then under fixed $l,l^*:l\leq l^*$ on the events $A_{njM}^+$ and $B_{nrC}$,
\begin{equation}
\label{equ:msum}
\begin{array}{l}
\displaystyle
\sum\nolimits_{k=l^*+1}^K \int_{\tau_{nl^*j}^-}^{\tau_{nkj}^-}(\widehat F_{nk}-F_{0k})dG\\ 
\displaystyle
\geq \sum\nolimits_{k=l^*+1}^K \int_{[\tau_{nl^*j}^-,\tau_{nkj}^-)} dM_{nk}- C(n^{-2/3}\vee n^{-1/3}(\tau_{nkj}^- -\tau_{nl^*j}^-)^{3/2}).
\end{array}
\end{equation}
By definition of $l$, 
$
\sum_{p=k+1}^K \widehat F_{np}(t_{n,j+1}) >\sum_{p=k+1}^K \widehat F_{np}(s_{njM}) 
$
for all $k=l,\ldots,K$. Then on the event $A_{njM}^+$, 
$
\sum_{p=1}^K \widehat F_{np}(t_{n,j+1}) >\sum_{p=k+1}^K \widehat F_{np}(s_{njM}) 
$.
Moreover, taking account of $\tau_{n1j}^-\leq\ldots\leq\tau_{nKj}^-$  we have that for all $k=l,\ldots,K$ and $u\geq \tau_{nkj}^-$,
$$
\sum\nolimits_{p=1}^k F_{np}(u)\geq \sum\nolimits_{p=1}^k F_{np}(\tau_{nkj}^-)\geq \sum\nolimits_{p=1}^k F_{np}(\tau_{npj}^-) > \sum\nolimits_{p=1}^k F_{0p}(s_{njM})
$$
Hence,
\begin{equation}
\label{equ:msum2}
\begin{array}{l}
\displaystyle
\sum\nolimits_{k=l^*}^K \sum_{p=1}^k \int_{\tau_{nkj}^-}^{\tau_{n,k+1,j}^-} \!\!(\widehat F_{np}-F_{0p})dG\\
\displaystyle
\hspace{+20ex}\geq 
\sum\nolimits_{k=1}^K \int_{\tau_{nkj}^-\vee \tau_{nl^*j}^-}^{s_{njM}}\!\!\!(F_{0k}(s_{njM})-F_{0k}(u))dG(u). 
\end{array}
\end{equation}
Let $D_{l_0l^*_0\tau}=\{l=l_0,l^*=l^*_0,\tau_{n1j}^-\leq\ldots\leq\tau_{nKj}^-\}$. Using (\ref{equ:msum}) and (\ref{equ:msum2}) we obtain that
\begin{eqnarray*}
&&\Pr\Bigl(\int_{\tau_{nlj}^-}^{s_{njM}}(\widehat F_{n+}(u)-F_{0+}(u))dG(u)<0,A_{njM}^+,B_{nrC},l=l_0,D_{ll^*\tau}\Bigr)\\
&&
\leq 
\Pr\biggl( \sum_{k=l^*+1}^K \int_{[\tau_{nkj}^-,s_{njM})}  dM_{nk}-\sum_{k=l^*+1}^K \int_{[\tau_{nl^*j}^-,s_{njM})} dM_{nk} \\
&& \hspace{8ex} - C^*(n^{-2/3}\vee n^{-1/3}(s_{njM}-\tau_{nl^*j}^-)^{3/2})
\\
&& \hspace{8ex}
+\sum\nolimits_{k=1}^K \int_{\tau_{nkj}^-\vee \tau_{nl^*j}^-}^{s_{njM}}\!\!\!\!(F_{0k}(s_{njM})-F_{0k}(u))dG(u)\leq 0,B_{nrC},D_{ll^*\tau}\biggr).
\end{eqnarray*}
It is clear, $\int_{\tau_{nkj}^-\vee \tau_{nl^*j}^-}^{s_{njM}}\!\!\!\!(F_{0k}(s_{njM})-F_{0k}(u))dG(u)\geq b(s_{njM}-\tau_{nl^*j}^-)^2\geq b(M-1)n^{-1/3}\log^{1/3} n$ for some $b>0$. Then for any $b_-<b$ under the sufficiently large $n$,
\begin{eqnarray*}
&&\Pr\Bigl(\int_{\tau_{nlj}^-}^{s_{njM}}(\widehat F_{n+}(u)-F_{0+}(u))dG(u)<0,A_{njM}^+,B_{nrC},D_{ll^*\tau}\Bigr)\\
&&
\leq 
\Pr\biggl( \sum\nolimits_{k=l^*+1}^K \int_{[\tau_{nkj}^-,s_{njM})}  dM_{nk}-\sum\nolimits_{k=l^*+1}^K \int_{[\tau_{nl^*j}^{-},s_{njM})}dM_{nk}\\
&&\hspace{+32.8ex} +\: b_-(s_{njM}-\tau_{nl^*j}^-)\leq 0,B_{nrC},D_{ll^*\tau} 
\biggr).
\end{eqnarray*}
Note that for any fixed $l,l^*:l\leq l^*$ under $\tau_{n1j}^-\leq\ldots\leq\tau_{nKj}^-$ the right hand side of the last inequality is bounded above by 
$$ 
\Pr\biggl( \sum_{k=l^*+1}^K \int_{[\tau_{nkj}^-,s_{njM})} \!\!\!\!\!\!\!\!\! dM_{nk}-\!\!\!\sum_{k=l^*+1}^K \int_{[\tau_{nl^*j}^{-},s_{njM})} \!\!\!\!\!\! \!\! \! dM_{nk}+b_-(s_{njM}-\tau_{nl^*j}^-)\!\leq\! 0,B_{nrC}\!\biggr).
$$
The same bound for the probability left hand side of (\ref{equ:sbound}) holds under $\tau_{n\sigma_1j}^-\leq\ldots\leq\tau_{n\sigma_Kj}^-$ for any permutation $\sigma=(\sigma_1,\ldots,\sigma_K)$ of the indices $(1,\ldots,K)$. 
Finally, applying Lemma \ref{lem:martingale} for each $l,l^*$ and $\sigma$ several times and conbinig results by the total probability formula we get (\ref{equ:sbound}). The lemma is proved.
\end{proof}

\bibliography{ArXiv_base.bib}

\end{document}